\def\bel{\begin{equation}\label}
\def\eeq{\end{equation}}
\def\bel{\begin{equation}\label}
\def\eeq{\end{equation}}
\newtheorem{Definition}{Definition}[section]
\newtheorem{Theorem}{Theorem}[section]
\newtheorem{Remark}{Remark}[section]
\newtheorem{Lemma}{Lemma}[section]
\newtheorem{Proposition}{Proposition}[section]
\newtheorem{Example}{Example}[section]
\newtheorem{Claim}{Claim}[section]
\title{  Lyapunov-like  functions \\involving Lie brackets}
\def\fudge{\mathchoice{}{}{\mkern.5mu}{\mkern.8mu}}
\def\bbc#1#2{{\rm \mkern#2mu\vbar\mkern-#2mu#1}}
\def\bbb#1{{\rm I\mkern-3.5mu #1}}
\def\bba#1#2{{\rm #1\mkern-#2mu\fudge #1}}
\def\bb#1{{\count4=`#1 \advance\count4by-64 \ifcase\count4\or\bba A{11.5}\or
\bbb B\or\bbc C{5}\or\bbb D\or\bbb E\or\bbb F \or\bbc G{5}\or\bbb H\or
\bbb I\or\bbc J{3}\or\bbb K\or\bbb L \or\bbb M\or\bbb N\or\bbc O{5} \or
\bbb P\or\bbc Q{5}\or\bbb R\or\bbc S{4.2}\or\bba T{10.5}\or\bbc U{5}\or%
\bbb P\or\bbc Q{5}\or\bbb R\or\bba S{8}\or\bba T{10.5}\or\bbc U{5}\or
\bba V{12}\or\bba W{16.5}\or\bba \v{11}\or\bba Y{11.7}\or\bba Z{7.5}\fi}}
\def \cf{{\rm CLF}}
\def \F {{\mathcal F}}
\def \R{I\!\!R}
\def \C {\mathcal{ T}}
\def \N {{\bb N}}
\def \ol {\overline}
\def\nn{I\!\!N}
\def\rr{I\!\!R}
\def \v{{\bf v}}
\def\ds{\displaystyle}
\def \vsm{\vskip 0.3 truecm}
\def \vv{\vskip 0.5 truecm}
\begin{document}
\begin{abstract}
For a given closed target we embed the
 dissipative  relation that defines  a control Lyapunov function
in a more general  differential  inequality involving Hamiltonians  built from   iterated  Lie brackets. The solutions of the resulting extended relation, here called {\it  degree-$k$ control Lyapunov functions ($k\geq 1)$}, turn out to be  still sufficient for the system to be globally asymptotically controllable to the target.   Furthermore,  we work out some examples where  no standard (i.e., degree-$1$)  smooth control Lyapunov functions   exist while  a {\it $C^\infty$}  degree-$k$ control Lyapunov function  does exist, for some $k>1$. The extension is performed  under very weak regularity assumptions on the system, to the point that,  for instance, (set valued) Lie brackets of locally Lipschitz vector fields are considered as well.

\end{abstract}
\author[M. Motta]{Monica Motta}
\address{M. Motta, Dipartimento di Matematica,
Universit\`a di Padova\\ Via Trieste, 63, Padova  35121, Italy}
 \email{motta@math.unipd.it}

\author[F. Rampazzo]{Franco Rampazzo}
\address{F. Rampazzo, Dipartimento di Matematica,
Universit\`a di Padova\\ Via Trieste, 63, Padova  35121, Italy}
\email{rampazzo@math.unipd.it}

\thanks{ This research is partially supported by  the Gruppo Nazionale per l' Analisi Matematica, la Probabilit\`a e le loro Applicazioni (GNAMPA) of the Istituto Nazionale di Alta Matematica (INdAM), Italy;  and
by Padova University grant PRAT 2015 ``Control
of dynamics with reactive constraints''}

\keywords{Lyapunov functions, Lie brackets, global asymptotic controllability, partial differential inequalities}
\subjclass[2010]{34A26, 93D30,  93C15}

\maketitle

\section{Introduction}
 A {\it  control Lyapunov function} (shortly, \cf) for a control system
\bel{E'}
 \left\{\begin{array}{l} \dot y =f(y,a) \\ \, \\
  y(0)=x\in\rr^n\backslash\C
  \end{array}\right.
\eeq
--where the control parameter $a$ ranges over a compact set of   controls,  and the (closed) subset $\C\subset \rr^n$   is regarded as  a {\it target}--  is a positive  definite function  $U:\overline{\rr^n\backslash \C}\to\rr$  \newline such that,  at each point $x\in {\rr^n\backslash \C}$,     the dynamics $f(x,a)$
 points in a direction along which $U$ is strictly decreasing, for a suitable choice of $a\in A$.  A wide literature investigates  the links between the existence  of a \cf \, and some properties of the  system-target pair.   Standard regularity assumptions   include local semiconcavity of $U$ in the interior of the domain of $U$, which, in  particular, allows  defining the set  of { limiting gradients} \footnote{See Definition \ref{DCLF}. Under this hypothesis,  $D^*U(x)$ coincides with the {\it  limiting subdifferential $\partial_LU(x)$},  largely used in the literature on Lyapunov functions.}  $ D^*U(x)$ at each $x\in \rr^n\backslash \C$. Therefore, the monotonicity of $U$ along suitable  directions of  $f$ can be expressed  by  means of the {\it dissipative}  differential inequality \begin{equation}\label{derivata}
H(x,D^*U(x) )  <0  \qquad\forall x\in \rr^n\backslash \C,
\end{equation}
where
$$
H(x,p):=\inf_{a\in A} \Big\langle p\, , f(x,a) \Big\rangle.
$$
Relation \eqref{derivata}  has to be interpreted as the occurrence, at each $x$, of the inequality $H(x,p) <0$  for every  $p\in  D^*U(x)$.
Since  $U$ is assumed to be  (proper and)  positive definite, by choosing controls verifying  \eqref{derivata} one is  ideally  looking for   trajectories  that run closer and closer to the target. More precisely, one has:
\begin{Theorem}\label{T1}If there exists a \cf, system {\rm  \eqref{E'}} is {\rm GAC} to $\C$.
\end{Theorem}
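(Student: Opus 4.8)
The plan is to deduce GAC from a single structural fact: that one can steer the system so that the value of $U$ decreases monotonically toward its minimum, after which the positive definiteness and properness of $U$ convert this decrease into convergence to $\C$ together with Lyapunov stability. The starting point is to turn the dissipative inequality \eqref{derivata} into the existence, at every $x\in\rr^n\setminus\C$, of an admissible velocity along which $U$ is strictly decreasing.

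The main obstacle lies exactly here. The relation $H(x,p)<0$ only asserts that, for each individual limiting gradient $p\in D^*U(x)$, \emph{some} control $a=a(p)$ satisfies $\langle p,f(x,a)\rangle<0$; to make $U$ decrease along a trajectory one instead needs a \emph{single} velocity that is simultaneously good for the whole superdifferential. I would bridge this gap by passing to the convexified (relaxed) dynamics $F(x):=\mathrm{co}\,f(x,A)$, which is compact and convex since $A$ is compact and $f$ continuous. As $U$ is locally semiconcave, $\partial^+U(x)=\mathrm{co}\,D^*U(x)$ is nonempty, compact and convex, and the one-sided directional derivative is $U'(x;v)=\min_{p\in\partial^+U(x)}\langle p,v\rangle$. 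Applying Sion's minimax theorem to the bilinear map $(p,v)\mapsto\langle p,v\rangle$ on $\partial^+U(x)\times F(x)$ gives
\[
\inf_{v\in F(x)}\max_{p\in\partial^+U(x)}\langle p,v\rangle
=\max_{p\in\partial^+U(x)}\inf_{v\in F(x)}\langle p,v\rangle
=\max_{p\in D^*U(x)}H(x,p)<0 ,
\]
where the middle equality uses that a linear functional has the same infimum over $f(x,A)$ and over its convex hull, and the final strict inequality is \eqref{derivata}. Hence there is $v_x\in F(x)$ with $\langle p,v_x\rangle<0$ for every $p\in D^*U(x)$, so that $U'(x;v_x)<0$: $U$ strictly decreases in the direction $v_x$. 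Upper semicontinuity of $x\mapsto D^*U(x)$ together with continuity of $f$ then yields a locally uniform rate, i.e. on each compact $K\subset\rr^n\setminus\C$ one may arrange $\langle p,v_x\rangle\le -c_K<0$ for all $p\in D^*U(x)$, $x\in K$.

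Next I would assemble these directions into a genuine trajectory. A measurable-selection argument (Filippov's lemma) together with the standard existence theory for the differential inclusion $\dot y\in F(y)$ produces an absolutely continuous solution $y(\cdot)$ with $y(0)=x$ along which, by the chain rule for semiconcave functions, $\frac{d}{dt}U(y(t))\le -c<0$ as long as $y$ stays away from $\C$. Thus $U(y(t))$ is strictly decreasing and enters every sublevel set $\{U\le\varepsilon\}$ in finite time; properness keeps $y$ bounded and precompact, while positive definiteness forces $y(t)\to\C$. The same monotonicity of $U$, read through its continuity and positive definiteness, yields Lyapunov stability, since choosing a sublevel set of $U$ contained in a prescribed neighbourhood of $\C$ confines the trajectory. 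This establishes asymptotic controllability for the relaxed system.

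Finally I would de-relax. By the Filippov--Wa\.zewski relaxation theorem every trajectory of $\dot y\in\mathrm{co}\,f(y,A)$ is a uniform-on-compact-intervals limit of trajectories of the original system $\dot y=f(y,a)$; since the target is closed and $U$ is proper, the asymptotic controllability obtained for the relaxed dynamics transfers to \eqref{E'}, giving GAC of the original system to $\C$. The technical heart, and the step I expect to be most delicate, is precisely the convexification--minimax--relaxation loop of the two middle paragraphs, needed to convert the pointwise, gradient-by-gradient information encoded in \eqref{derivata} into an honest, uniformly decreasing trajectory of the genuine control system.
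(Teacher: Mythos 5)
Your argument breaks at the second equality of your key display. You claim
\[
\max_{p\in\partial^+U(x)}\;\inf_{v\in F(x)}\langle p,v\rangle
\;=\;\max_{p\in D^*U(x)}H(x,p),
\]
but $H(x,\cdot)=\inf_{a\in A}\langle\,\cdot\,,f(x,a)\rangle$ is a \emph{concave} function of $p$, so its maximum over the convex hull $\partial^+U(x)=\mathrm{co}\,D^*U(x)$ can be strictly larger than its maximum over $D^*U(x)$; the hypothesis \eqref{derivata} is stated only on limiting gradients and does not propagate to their convex combinations. A concrete counterexample: take $n=2$, $m=1$, $f_1\equiv(1,-1)$, $A=\{\pm 1\}$, $\C=\{x:\ \max(|x_1|,|x_2|)\ge 2\}$, and $U(x)=2-\max(|x_1|,|x_2|)$, which is concave (hence semiconcave), continuous up to $\partial\C$, positive definite and proper. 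Then $H(x,p)=-|p_1-p_2|$ equals $-1$ on every element of $D^*U(x)$ at every $x$, so $U$ is a \cf\ in the sense of the theorem; yet at the kink $x=(1,1)$ one has $D^*U(x)=\{(-1,0),(0,-1)\}$, the midpoint $\bar p=(-\tfrac12,-\tfrac12)$ belongs to $\partial^+U(x)$, and $H(x,\bar p)=0$. By Sion's theorem (your first equality, which is correct) this forces $\inf_{v\in F(x)}\max_{p\in\partial^+U(x)}\langle p,v\rangle=0$: no velocity $v_x$, relaxed or not, makes the Clarke directional derivative of $U$ negative there (indeed $0\in F(x)$, and the relaxed inclusion admits the stationary solution $y\equiv(1,1)$, along which $U$ never decreases). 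So the object on which your entire construction rests --- a single direction simultaneously good for the whole superdifferential, feeding the differential inclusion, the a.e.\ chain rule, and the Filippov--Wa\.zewski de-relaxation --- need not exist under the stated hypothesis. Your route would be sound only under the strictly stronger assumption $H(x,p)<0$ for all $p\in\mathrm{co}\,D^*U(x)$.

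This is exactly the point where the paper (which handles $k=1$ by citing \cite{MR}, and whose Section 3 runs the same scheme for general $k$) takes a different, and weaker-hypothesis-compatible, path: it never looks for a direction good for all supergradients at once. One fixes a \emph{single} selection $p(x)\in D^*U(x)$, chooses a control direction $v$ with $\langle p(x),v\rangle\le-\gamma(U(x))$ (Proposition \ref{claim1} supplies the rate $\gamma$), and then exploits the semiconcavity inequality $U(\hat x)-U(x)\le\langle p(x),\hat x-x\rangle+\rho|\hat x-x|^2$ --- valid for that same $p(x)$ --- to bound the \emph{actual} increment of $U$ over a short sampling step along $v$; at the new point one re-samples a new gradient and a new direction, and the resulting piecewise trajectory, together with an explicit construction of the ${\mathcal KL}$ bound $\beta$, gives GAC. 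In your counterexample this works perfectly: at $(1,1)$, the gradient $p=(-1,0)$ pairs with $v=(1,-1)$ and $U$ decreases along that step, even though no single velocity serves both limiting gradients in the worst-case sense. In short, the convexification/minimax obstacle you identified as the ``technical heart'' is circumvented by sampling, not solved; as written, your proof has a genuine gap that cannot be repaired without strengthening the hypothesis of the theorem. A secondary, lesser, omission is the uniform ${\mathcal KL}$ estimate in terms of ${\bf d}(x)$ required by Definition \ref{GAC}, which your last paragraph only gestures at, while the paper devotes a separate construction to it.
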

As customary, GAC to $\C$ is acronym of  {\it globally asymptotically controllable to $\C$}  (see Definition \ref{GAC}), which means that   for any  initial point $x$ there  exists a system trajectory $y(\cdot)$,   $y(0)=x$,   approaching the target $\C$ (in possibly infinite time), uniformly  with respect to the distance ${\bf d}(x,\C)$.

Results like Theorem \ref{T1} --of which some {``inverse"} versions exist as well --
lie at the basis of various constructions dealing, in particular, with stabilizability (see e.g. \cite{S2}, \cite{Ri} and the references therein).
Nonsmoothness is crucial for control Lyapunov functions: though relation \eqref{derivata} is a  partial differential   inequality --so admitting many more solutions than the corresponding Hamilton-Jacobi  equation-- in general no smooth control Lyapunov functions exist.  A great deal of effective ideas  has been flourishing during the last four decades to deal with this unavoidable lack of regularity  (see e.g. \cite{CLSS}, \cite{MaRS},  the books \cite{CLSW}, \cite{BR} and the references therein).
 Nevertheless, the regularity issue is of obvious  interest from a numerical point of view. In addition, any  feedback stabilizing strategy   would likely benefit from smoothness properties  of   a \cf\, (or of  some suitable \cf's replacement)-- in particular, in reference  with  sensitivity to data errors.

As an  attempt to reduce the unavoidability of nonsmoothness, in the present paper we   replace  relation \eqref{derivata}  with a less demanding inequality which  involves Lie brackets \footnote{We remind that the {\it Lie bracket}    of two $C^1$ vector fields $X,Y$   is defined (on any coordinate chart)  as $[X,Y]:=DY\cdot X-DX\cdot Y$.}. Let us assume that the dynamics is driftless control affine, namely:
 \bel{odeL}
 \left\{\begin{array}{l} \dot y =\ds \sum_{i=1,\ldots,m}a_if_i(y) \\ \, \\
  y(0)=x\in\rr^n\backslash\C  ,
  \end{array}\right.
  \eeq
and let  $A:=\{\pm e_1,\ldots,\pm e_m\}$ \footnote{More general control systems can be  considered: see Remark \ref{remgen} and subsection \ref{ssgen}. }.
Assume  the vector fields  $f_1,\ldots,f_m$ are of class $C^{k-1}$ for some integer $k\geq 1$. We will  define \footnote{See Section \ref{nonsmooth} for an extension of the notion of $H^{(2)}$ when the vector fields are locally Lipschitz but not $smooth$.} the {\it  degree-$k$ Hamiltonian $H^{(k)}(x,p)$} by setting
\bel{newH} H^{(k)}(x, p) := \inf_{v\in  { \mathcal F^{(k)}(x)}} \big\langle p ,v \big\rangle\qquad\forall (x,p)\in (\rr^n\backslash \C)\times\rr^n,
\eeq
where   ${ \mathcal F}^{(k)}$ denotes the family of iterated  Lie brackets  of degree $\leq k$ of the  vector fields $f_1,\ldots,f_m$.
(Notice, in particular,  that $H^{(1)} =H$).

A function  $U:\overline{\rr^n\backslash \C}\to\rr$ will be called a  {\it
 degree-$k$  control Lyapunov function} --shortly,  degree-$k$  \cf--  if  (it is positive definite, proper, semiconcave on  domain's interior, and) it verifies inequality
\bel{new} H^{(k)}(x, D^*U(x)) <0 \qquad\forall x\in \rr^n\backslash \C.
\eeq
Observe that, because of  \bel{chain}H^{(k)}\leq H^{(k-1)}\dots\leq H^{(1) }, \eeq
 relation \eqref{new} is weaker  than \eqref{derivata}.

Still, in view of Theorem \ref{th1int} below, the inequality \eqref{new} is sufficient for the system to be GAC to $\C$, as stated in the following result:
\begin{Theorem}\label{th1int} Let   a    degree-$k$  \cf \, exist, for some positive integer $k$. Then   system  {\rm  \eqref{odeL}}  is {\rm GAC} to $\C$.\end{Theorem}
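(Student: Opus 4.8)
The plan is to establish a pointwise ``Lie-bracket decrease'' lemma and then globalize it into a GAC trajectory, mirroring the role that the infinitesimal decrease \eqref{derivata} plays in the proof of Theorem \ref{T1}. The conceptual picture is that a degree-$k$ \cf\ $U$ is nothing but an ordinary (degree-$1$) \cf\ for the extended differential inclusion $\dot y\in\mathcal F^{(k)}(y)$, whose admissible velocities are all iterated brackets of degree $\le k$; so the heart of the matter is to transfer asymptotic controllability from this bracketed inclusion back to the genuine system \eqref{odeL}, whose only directly available velocities are $\pm f_1,\dots,\pm f_m$. The tool for this transfer is the classical commutator construction: because the controls $\pm e_i$ let us flow forward and backward along each $f_i$, any bracket direction $v=f_{[I]}(x)\in\mathcal F^{(k)}(x)$ of degree $k$ can be approximately realized by a suitable concatenation of such flows. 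Concretely, for small $\delta>0$ there should be an admissible trajectory issuing from $x$, of total duration of order $\delta^{1/k}$ and remaining within distance $O(\delta^{1/k})$ of $x$, whose endpoint $x_\delta$ satisfies $x_\delta=x+\delta\,v+o(\delta)$.

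Granting such a realization, the local decrease follows from semiconcavity. Fix $x\in\rr^n\setminus\C$. Since $H^{(k)}(x,D^*U(x))<0$, there are $v\in\mathcal F^{(k)}(x)$ and $c>0$ with $\langle p,v\rangle\le -c$ for every $p\in D^*U(x)$. For a semiconcave function the superdifferential is the convex hull of the limiting gradients, so $\max_{p\in D^*U(x)}\langle p,h\rangle$ computes the supporting value of the whole superdifferential, and one has the one-sided quadratic estimate
\[
U(x+h)-U(x)\ \le\ \max_{p\in D^*U(x)}\langle p,h\rangle + C\,|h|^2,
\]
valid with a locally uniform constant $C$. Applying this with $h=x_\delta-x=\delta v+o(\delta)$ gives
\[
U(x_\delta)-U(x)\ \le\ -c\,\delta+o(\delta)\ <\ 0
\]
for all sufficiently small $\delta$. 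Thus along an admissible trajectory the value of $U$ can be strictly decreased; crucially, the estimate compares only the endpoints, so the large intermediate excursions of the commutator path (of size $O(\delta^{1/k})\gg\delta$ when $k\ge 2$) do not spoil the argument.

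It remains to globalize. Using properness and positive definiteness of $U$, the sublevel sets are compact and shrink to $\C$; on each compact annulus $\{\,0<\ell_1\le U\le\ell_2\,\}$ the bracket $v$, the gap $c$, and the constant $C$ can be chosen uniformly, so the local step produces a uniform decrease rate of $U$ per unit excursion. Concatenating such steps should yield an admissible trajectory $y(\cdot)$, $y(0)=x$, along which $U(y(\cdot))$ is strictly decreasing and tends to its infimum; positive definiteness then forces $U(y(t))\to 0$, i.e. $y(t)\to\C$ in possibly infinite time. The uniformity of the per-level decrease, together with the compactness of sublevel sets, delivers the dependence on ${\bf d}(x,\C)$ demanded by the definition of GAC.

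The main obstacle is the bracket-realization step, and in particular controlling its error term uniformly under the weak regularity hypotheses of the paper: for $C^{k-1}$ vector fields the degree-$k$ bracket is merely continuous, so the expansion $x_\delta=x+\delta v+o(\delta)$ cannot rest on a naive Taylor expansion of the flows and must be derived from a more careful (integral/averaged, or set-valued) formulation of the commutator identity; this is exactly where the set-valued Lie brackets of locally Lipschitz fields mentioned in the introduction should enter. A secondary technical point is ensuring that the concatenation of infinitely many shrinking steps yields a well-defined trajectory converging to $\C$ with the required uniform modulus, rather than stalling at a positive level of $U$.
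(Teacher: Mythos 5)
Your proposal takes essentially the same route as the paper's proof: the paper realizes each bracket direction $\v(x)\in\F^{(h)}(x)$ by a piecewise-constant commutator control with endpoint error $O(t^{h+1})$ (Lemma \ref{applemma}, imported from \cite{FR1},\cite{FR2}, under a temporary boundedness hypothesis later removed by a cut-off), obtains the endpoint decrease $U(y_z(t,\alpha_t))-U(z)\le -\tfrac{\gamma(U(z))}{2}(t/r)^h$ from the semiconcavity inequality exactly as you do (Claim \ref{claim2h}, with the uniform gap $-\gamma(U(x))$ supplied by Proposition \ref{claim1}), and then concatenates such steps, the monotonicity of the step-time $\tau(u,h)$ ruling out stalling at a positive level of $U$ and feeding the explicit construction of the ${\mathcal KL}$ bound. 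The one misdirection is your closing remark: for $C^{k-1}$ fields the realization lemma already handles the merely continuous degree-$k$ bracket (via the integral representations of \cite{FR1},\cite{FR2}), and set-valued brackets enter only in the separate Lipschitz case of Section \ref{nonsmooth}.
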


The  use of Lie brackets  as {\it higher order directions} is widespread in Control Theory, both within necessary   conditions for optimality   and  within sufficient  conditions for  various kinds of  controllability (see e.g. \cite{AgSa}, \cite{BP}, \cite{Co}, \cite{K}, \cite{S1},\cite{Su},\cite{FHT}). Furthermore, they are involved in boundary conditions ensuring uniqueness  for Hamilton-Jacobi equations, e.g. in relation with continuity properties of the corresponding value function (see e.g. \cite{BCD},\cite{So}).  However,  here Lie brackets are directly involved in the proposed   differential inequalities.

As for the regularity issue,  we wish to remark that  a degree-$k$ control Lyapunov function, $k> 1$,  may happen to be  more regular than a standard (i.e., degree-$1$)  control Lyapunov function.   It  may even occur  the case where $H^{({{k}})}(x,D^*U(x)) <0$ for some   $C^\infty$ function $U$,   while no smooth $U$ satisfies the standard inequality $H(x,D^*U(x)) <0$ (see Examples \ref{es1}-\ref{es4} below).
 Let us observe that the two reasons why a  (degree-$1$) control Lyapunov function may result discontinuous are  : i) the  {\it shape} of the target's boundary $\partial\C$; ii)   the  {\it shortage}  of   dynamics' directions.  While  there is nothing one can do to remedy  i),  the introduction  of Hamiltonians $H^{(k)}$ \,  ($k>1$), which are minima over larger sets of directions,  is a way  to reduce the effects of  ii).

The regularity hypotheses in the case of degree-2 control Lyapunov functions are relaxed in 
  Section \ref{nonsmooth} in order to include  Lipschitz continuous vector fields. Since the classical brackets $[f_i,f_j]$ may happen to be  not even  defined  at possibly infinitely many  points, we  make use of the  generalized, set-valued brackets defined in
\cite{RS1}. Accordingly, the Hamiltonian  $H^{(2)}$ is  computed as a  min-max value. Let us remark that the  degree-$2$ control Lyapunov function of  Example \ref{esL} is $C^\infty$ despite the fact the vector fields are  not even $C^1$.

\subsection{Preliminaries and notation}\label{prel}
For the reader convenience, some classical concepts, like  global asymptotic controllability  to a set $\C$, in short GAC to $\C$, and
a few technical definitions are here recalled.
\vsm

 Given an  integer $k\ge 1$ and an open subset $\Omega\subseteq\R^n$, we write  $C^k(\Omega)$  to  denote the set of  vector fields of class $C^k$ on $\Omega$, namely,  $C^k(\Omega):= C^k(\Omega,\R^n)$.  
The subset  $C_b^k(\Omega)\subset C^k(\Omega)$  of functions with bounded derivatives (up to the order $k$) will be endowed with the norm
$$
\| f\|_k:=\sum_{i=0,\dots, k}\, \sup_{x \in \Omega} |f^{(i)}(x)| \qquad (f^{(0)}:=f)
$$
(which makes it a Banach space). Similarly,    $C^{k-1,1}(\Omega) \subset C^{k-1}(\Omega)$  denotes the  subset of  vector fields  whose $k-1$-th derivative is  locally Lipschitz continuous and  $C_b^{k-1,1}(\Omega)$ is the subset of $C_b^{k-1}(\Omega)$ with (globally) Lipschitz continuous $k-1$-th derivative.

\begin{Definition}\label{adm}  Let $k\geq 1$  be an integer, and let   $f_1,\dots,f_m$ be vector fields belonging to $C^{k-1}(\R^n\setminus\C)$.  For any initial condition $x\in \R^n\setminus\C$ and any measurable control $\alpha:[0,+\infty)\to A$, a  trajectory-control  pair $(y ,\alpha)(\cdot)$ will be called {\rm admissible}  if there exists   $T\le+\infty$ such that  $y(\cdot)$ is a solution of \eqref{odeL} defined on $[0,T)$ and
$$
\lim_{t\to T}{\bf d}(y(t))=0,
$$
where ${\bf d}(\cdot):= {\bf d}(\cdot,\C)$.  When $k>1$, we will use $y_x(\cdot,\alpha)$  to denote the unique (possibly local) forward  solution to the Cauchy problem  \eqref{odeL}.
 \end{Definition}

\begin{Remark}\label{Crem} {\rm The main  object of the paper consists in establishing relations involving Lie brackets, so that a certain regularity is necessary when $k>1$ (see also Section \ref{nonsmooth}). However, observe that  as soon as  $k=1$ the vector fields $f_1 , \ldots,f_m$  are  just continuous, so that solutions of the Cauchy problem \eqref{odeL} for a given control may be not unique.  }
\end{Remark}

To give the notion of global asymptotic  controllability,  we  recall that  ${\mathcal KL}$ is used to denote the set of  continuous functions  $\beta:[0,+\infty)\times[0,+\infty)\to[0,+\infty)$ such that: (1)\, $\beta(0,s)=0$ and $\beta(\cdot,s)$ is strictly increasing  and unbounded for each $s\ge0$; (2)\, $\beta(\delta,\cdot)$ is decreasing  for each $\delta\ge0$; (3)\, $\beta(\delta,s)\to0$ as $s\to+\infty$ for each $\delta\ge0$.

 \begin{Definition}\label{GAC} The control system in {\rm (\ref{odeL})} is   {\rm globally  asymptotically controllable to $\C$}  --shortly, {\rm (\ref{odeL})} is {\rm GAC to $\C$}--  provided  there is a  function $\beta\in{\mathcal KL}$
 such that,  for each initial state $x\in\R^n\setminus\C$,   there exists an admissible  trajectory-control pair $(y,\alpha)(\cdot)$  such that
\bel{bbound}
{\bf d}(y(t))\le \beta\big({\bf d}(x),t\big) \qquad \forall t\in[0,+\infty). \,\,
\footnote{ By convention, we fix an arbitrary $\bar x\in\partial\C$ and  formally establish that, if $T<+\infty$, the trajectory $y(\cdot)$ is prolonged to $[0,+\infty)$,
by setting $y(t)=\bar x$ for all $t\geq T$.}
\eeq
\end{Definition}

 Let us recall that if $g_1, g_2$ are $C^1$ vector fields on a differential manifold (of class $C^2$), their {\it Lie bracket} $[g_1,g_2]$ is the (continuous) vector field which  is  defined (on coordinate charts) by
$$
[g_1,g_2]= Dg_2\cdot g_1 - Dg_2\cdot g_1.
$$
Since $[g_1,g_2]$ turns out to be a vector field, provided sufficient regularity is assumed,  one can iterate the bracketing process so obtaining {\it iterated
Lie brackets}. We call {\em degree } of a given iterated bracket $B$  the number of objects appearing in $B$ (regarded as a formal object) when commas and left and right brackets are deleted. For instance, the degrees of $[[g_2,g_3],g_2]$,  $[[g_2,g_3],[g_2,g_4]]$, and $[g_4,[g_4,[g_4,[g_4,g_6]]]]$ are $3,4$, and $5$, respectively.

\vsm
Let us summarize  some basic notions in nonsmooth analysis  (see  e.g. \cite{CS}, \cite{CLSW} for a thorough treatment).
\begin{Definition}[Positive definite and proper functions]  A continuous function $F:\overline{\R^n\setminus\C} \to\R$ is said {\rm positive definite on $\R^n\setminus\C$} if  $F(x)>0$ \,$\forall x\in\R^n\setminus\C$ and $F(x)=0$ \,$\forall x\in\partial\C$. The function $F$ is called {\rm proper  on $\R^n\setminus\C$}  if the pre-image $F^{-1}(K)$ of any compact set $K\subset[0,+\infty[$ is compact.
\end{Definition}

 \begin{Definition} {\rm (Semiconcavity). }\label{sconc} Let $\Omega\subset\R^n$. A  continuous function $F:\Omega\to\R$  is said to be {\rm  semiconcave  on $\Omega$} if   there exist $\rho>0$ such that
 $$
 F(z_1)+F(z_2)-2F\left(\frac{z_1+z_2}{2}\right)\le \rho|z_1-z_2|^2,
 $$
 for all   $z_1$, $z_2\in \Omega$ such that $[z_1,z_2]\subset\Omega$. The constant $\rho$ above is called a {\rm semiconcavity constant} for $F$ in $\Omega$.  $F$ is said to be {\rm  locally semiconcave  on $\Omega$} if it semiconcave on every compact subset of $\Omega$.
 \end{Definition}
Let us remind that locally semiconcave functions are locally Lipschitz. Actually, they are twice differentiable almost everywhere.
 \begin{Definition}\label{D*}{\rm (Limiting gradient). }  Let $\Omega\subset\R^n$ be an open set, and let  $F:\Omega\to\R$  be a locally Lipschitz function.  For every $x\in \Omega$ let us set
$$
D^*{F}(x) \doteq \Big\{ w\in\R^n: \ \  w=\lim_{k}\nabla {F}(x_k), \ \  x_k\in DIFF(F)\setminus\{x\}, \ \ \lim_k x_k=x\Big\}
$$
where   $\nabla$ denotes the classical gradient operator and $DIFF(F)$ is the set of differentiability points of $F$.  $D^*{F}(x)$ is called
the  {\rm set of limiting gradients} of $F$ at $x$.
\end{Definition}
The set-valued map $x\mapsto D^*F(x)$ is upper semicontinuous on $\Theta$,  with nonempty, compact values. Notice that   $D^*{F}(x)$ is not convex.
  When $F$ is a locally semiconcave function,  $D^*{F}$ coincides  with the limiting subdifferential $\partial_LF$, namely,
  $$
  D^*F(x)=\partial_LF(x) := \{\lim \,  p_i: \  p_i\in \partial_PF(x_i), \ \lim\,  x_i=x\} \quad \forall x\in\Theta,
  $$
where  $\partial_PF$ denotes the proximal subdifferential,  largely used in the literature on Lyapunov functions.

%%%%%%%%%%%%%%%%%%%%%%%%%%%%%%%%%%%%%%%%%%%%%%%%%%%%%%%%%%%%%%%%%%%%%%%%%%%%%%%

\section{Degree-$k$ control Lyapunov functions}\label{S2}
\subsection{The main result}
 Let $k\geq 1$  be an integer.  Throughout the whole paper we assume that  the target  $\C\subset\R^n$  is a closed set with compact boundary and that $f_1,\dots,f_m$ are vector fields belonging to $C_b^{k-1}(\Omega\setminus\C)$  for any open, bounded subset $\Omega\subset\R^n$   (see Subsection \ref{prel}).

\begin{Definition}
Let us consider the family of vector fields
$$
\F^{(1)}:=  \left\{  f=\sum_{i=1}^ma_if_i, \quad a\in A\right\} = \big\{\pm f_i,\quad i=1,\ldots,m\big\}.
$$
Moreover,  if $k>1$,  for  every positive  integer $h$ such that  $2\leq h\le {{k}}$, set
$$\F^{(h)}:=\big\{B,\quad B \,\, \hbox{is a iterated Lie  bracket  of degree} \leq h\hbox{\,\,\, of}\,\,  f_1,\ldots,f_m\big\}$$
\end{Definition}
Clearly, every element of $\F^{(h)}$ is a vector field belonging to $C_b^{{k-h}}(\Omega\setminus\C)$  for any open, bounded subset $\Omega\subset\R^n$.
Notice that
\bel{inclu}\F^{(1)}\subseteq \F^{(2)}\subseteq\dots\subseteq\F^{(k)}
.\eeq
For every $h=1,\ldots,k$, let us introduce the set-valued map
$$
\F^{(h)}(x) := \Big\{X(x),\quad X\in \F^{(h)}\Big\} \qquad \forall x\in \R^n\setminus\C.
$$

\begin{Definition} For any   integer $1\le h\le k$,   let us define the {\em degree-$h$  Hamiltonian} $H^{({{h}})}$ corresponding to the control system \eqref{odeL},  by setting
$$
H^{({{h}})}(x, p):= \inf_{v\in {\F^{({{h}})}(x)}}\big\langle p ,\, v\big\rangle\qquad \forall (x,p)\in (\R^n\setminus\C)\times \rr^n.
$$
\end{Definition}
Under the above hypotheses the Hamiltonians $ H^{({{h}})}$ are well defined and   continuous.
 As already mentioned in the Introduction,   the degree-$1$  Hamiltonian $H^{(1)}$ coincides with the standard Hamiltonian:
 $$
 H^{(1)}(x,p) = H(x,p):=\displaystyle  \inf_{a\in A}\Big\langle p ,\, \sum_{i=1}^m a_i f_i(x)\Big\rangle.
 $$
  Morever, by \eqref{inclu} one gets
\bel{hminor}
H^{({{k}})} \le H^{({{k-1}})}\leq\dots\le H^{({{1}})}.
\eeq

\vskip5truemm
 \begin{Definition}\label{DCLF}
 We call {\em degree-$k$  control Lyapunov function} --in short,  {\rm degree}-$k$ \cf--
any  continuous  function $U:\overline{\R^n\setminus\C}\to\R$ such that the restriction  to ${\R^n\setminus\C}$ is  locally  semiconcave, positive definite, proper, and verifies
\bel{c2h}
 H^{({{k}})}(x,D^*U(x))  < 0\quad\forall x\in\R^n\setminus\C,
\eeq
the latter inequality meaning $ H^{({{k}})}(x,p)  < 0$  for each $p\in D^*U(x)$.
\end{Definition}

In Theorem \ref{Th3.1gen} below we prove  that the existence of a degree-$k$ control Lyapunov function, $k>1$, is  sufficient for  the system to be globally asymptotically controllable  to $\C$ (GAC to $\C$, see Definition \ref{GAC}), as in the classical  case $k=1$.

\begin{Theorem}\label{Th3.1gen}
Let us assume that, for some integer $k\geq 1$,    a degree-$k$ control Lyapunov function exists.
Then   system {\rm (\ref{odeL})} is GAC to $\C$.
\end{Theorem}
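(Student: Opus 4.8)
The plan is to derive the ${\mathcal KL}$-estimate of Definition~\ref{GAC} from a \emph{quantitative descent lemma} for $U$, in which the descent direction is not an admissible velocity $\pm f_i$ but an iterated Lie bracket $B\in\F^{(k)}$, realized only \emph{asymptotically} by genuine trajectories of \eqref{odeL}. The guiding remark is that, by Definition~\ref{DCLF} and \eqref{hminor}, the degree-$k$ Hamiltonian $H^{(k)}$ is precisely the standard Hamiltonian of the enlarged control system whose admissible velocities are the (finitely many, continuous) bracket fields in $\F^{(k)}$; hence $U$ is an ordinary control Lyapunov function for that enlarged system, and Theorem~\ref{T1} already yields its GAC to $\C$. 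The whole difficulty is therefore to \emph{transfer} this conclusion to the original driftless system \eqref{odeL}, i.e.\ to show that motion along the bracket directions can be reproduced, up to a controlled error, by concatenations of the flows of $\pm f_1,\dots,\pm f_m$.

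First I would prove the following local statement. Fix $x\in\R^n\setminus\C$, choose $p\in D^*U(x)$, and select a bracket $B\in\F^{(k)}$, say of degree $h\le k$, attaining $\langle p,B(x)\rangle=H^{(k)}(x,p)\le-\delta<0$ (possible by \eqref{c2h}). Using the upper semicontinuity of $x\mapsto D^*U(x)$, the continuity of the bracket fields, and the compactness of each shell $\{r\le{\bf d}(\cdot)\le R\}$, the gap $\delta$ can be chosen uniformly on that shell. Next I would build the commutator (multi-)flow $\Phi_s$ associated with $B$: a suitable concatenation of the flows of $\pm f_{i_1},\dots$ for which $\Phi_s(x)=x+s\,B(x)+o(s)$, while the elapsed physical time is of order $s^{1/h}$, the remainder $o(s)$ being uniform on compact subsets of $\R^n\setminus\C$ thanks to the $C_b^{k-1}$ regularity. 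Since $p\in D^*U(x)\subseteq\partial^+U(x)$ and $U$ is locally semiconcave with some constant $C$ on the shell, one gets $U(\Phi_s(x))-U(x)\le\langle p,\Phi_s(x)-x\rangle+C|\Phi_s(x)-x|^2\le-\delta s+O(s^2)+o(s)\le-\tfrac{\delta}{2}s$ for all small $s>0$. This exhibits an admissible piece of trajectory of \eqref{odeL} along which $U$ strictly decreases by a controlled amount.

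Then I would iterate the descent lemma, concatenating such pieces into a single admissible trajectory-control pair $(y,\alpha)$ along which $U(y(\cdot))$ is strictly decreasing. Because $U$ is positive definite and proper, its sublevel sets are compact and $U(y(t))\to0$ forces ${\bf d}(y(t))\to0$; comparing the decrements with the physical time spent (an ODE-comparison argument, or a summation over dyadic shells $\{2^{-j-1}\le U\le 2^{-j}\}$) yields a decreasing time-estimate which, after bounding $U$ from above and below by functions of ${\bf d}$ via positive definiteness and properness, is repackaged into a single $\beta\in{\mathcal KL}$ as required by \eqref{bbound}. This is essentially the $k=1$ scheme underlying Theorem~\ref{T1}, now fed by the bracket-descent directions.

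I expect the main obstacle to be the construction and the \emph{uniform} estimation of the commutator flows $\Phi_s$: proving $\Phi_s(x)=x+sB(x)+o(s)$ with the remainder uniform in $x$ on shells disjoint from $\C$ (this is exactly where $C_b^{k-1}$ is used, and, in the merely Lipschitz setting of Section~\ref{nonsmooth}, where the set-valued bracket calculus of \cite{RS1} and a min--max reading of $H^{(2)}$ become necessary), and then interlocking these purely local constructions into one trajectory that simultaneously stays in the region where the estimates hold, keeps decreasing $U$, and reaches $\C$ with a single uniform ${\mathcal KL}$ rate. The nonsmoothness of $U$ is the easy part, being absorbed by semiconcavity; the genuinely delicate point is the quantitative higher-order approximation of the bracket directions and its global assembly.
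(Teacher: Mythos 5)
Your proposal follows essentially the same route as the paper's proof: you uniformize the strict inequality \eqref{c2h} into a rate $\gamma(U(\cdot))$ (the paper's Proposition \ref{claim1}), realize the selected bracket direction by a commutator multi-flow with a quantitative error estimate (the paper's Lemma \ref{applemma}, cited from \cite{FR1}, \cite{FR2}), combine this with the semiconcavity inequality to obtain a per-step strict decrease of $U$ (the paper's Claim \ref{claim2h}), and then iterate and repackage the decrements-versus-elapsed-time comparison into a ${\mathcal KL}$ bound, exactly as in Section \ref{S3} (including the need for a cut-off/compactness device to keep the estimates uniform). The differences — your parametrization $\Phi_s(x)=x+sB(x)+o(s)$ versus the paper's $(t/r)^h$ scaling, and distance shells versus sublevel sets $U^{-1}((0,2\sigma])$ — are only cosmetic.
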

We postpone  the proof of Theorem \ref{Th3.1gen}  to the next section and  make some  general remarks. Furthermore, we   give   some  examples where, in particular,  the distance function  is a (possibly smooth)   {\rm degree}-$k$  {\rm CLF} for some $k>1$
 and {\it is not }    a  {\rm degree}-$1$  {\rm CLF}.

%%%%%%%%%%%%%%%%%%%%%%%%%%%%%%%%%%%%%%%%%%%%%%%%%%%%%%%%%%%%%%%%%%%%%%%%%%%%%%%%

 \subsection{Remarks and examples}

\begin{Remark}\label{wr} {\rm The regularity assumptions can be slightly weakened in some cases by observing that, in order that  certain degree-$k$ brackets ($k>3$)  to be defined, it is not necessary that the vector fields are $k-1$ times differentiable. For instance, the bracket  $[[f_1,f_2],[f_3,f_4]]$ is well defined as soon as the vector fields $f_1,f_2,f_3,f_4$ are two times differentiable.  }
\end{Remark}

\begin{Remark}\label{remgen}
{\rm By suitably rescaling time, one can easily generalize  Theorem \ref{Th3.1gen} to the case when
the control set $A$  contains a ball of $\rr^m$ with positive radius.
By means of  linear algebraic and    relaxation arguments one can also try to  extend the result  up to the point of  admitting  sets $A$ such that $0$ is contained in the interior of the convex hull $co (A)$.}

\begin{Remark} {\rm
It is   easy to adapt Theorem \ref{Th3.1gen} to the case when the state space is an open set $\Omega\subset\R^n$, $\Omega\supset\C$.  In fact,  the thesis keeps unchanged as soon as  one requires the  degree-$k$ CLF\, $U:\Omega\setminus\overset{\circ} \C\to\R$ to verify all the assumptions in  Definition \ref{DCLF} in $\Omega$,   plus the following one:
$$
\exists U_0\in(0,+\infty]: \ \
\lim_{x\to x_0, \  x\in\Omega}U(x)=U_0 \ \  \forall x_0\in\partial\Omega;  \quad
U(x)<U_0 \quad\forall x\in\Omega\setminus\overset{\circ} \C.
$$}
\end{Remark}

\end{Remark}
 \begin{Remark}\label{InvL}  {\rm
 While  the fact that $U$ is   a degree-$k$ control Lyapunov function  implies that $U$ is also a  degree-$\bar k$ control Lyapunov function for every ${{\bar k>  k}}$, the converse is in general   false (see Example \ref{es1}). On the other hand, coupling {Theorem} \ref{Th3.1gen}   with  an {\it inverse Lyapunov} result like in   \cite{S2}, \cite{Ri},   it is easy to verify  that the existence of a  degree-$k$ control Lyapunov function, $k>1$, implies the existence of a standard (i.e., degree-$1$) control Lyapunov function. }
 \end{Remark}

\begin{Remark}{\rm
As in the case of standard (i.e. degree-$1$) \cf, the notion of degree-$k$ \cf\, is intrinsic, for vector fields, their Lie brackets, and the set of limiting gradients $D^*U$ are chart independent.  In particular the results in Theorems \ref{Th3.1gen} and \ref{Th3.1genL} are fit to be extended to Riemannian manifolds (where, of course, the notion of distance should coincide with the considered Riemannian metric).
Incidentally,  let us notice that we can define the Hamiltonians $H^{(k)}$
in terms of {\it Poisson brackets} \footnote{If $H(x,p)$ and $K(x,p)$ are differentiable functions, the {\it Poisson bracket} $\{H,K\}$ is defined by
$$
\{H,K\}(x,p):= \ds\sum_{i=1}^k \left(\frac{\partial H}{\partial x_i}\frac{\partial K}{\partial p_i} - \frac{\partial H}{\partial p_i}\frac{\partial K}{\partial x_i} \right)(x,p).
$$}.
Indeed, setting, for every vector field $X$
$$
H_{X}(x,p) :=\langle p  , X(x) \rangle
$$
 one has
$$
H^{(1)}(x,p)  = \inf\Big\{-| H_{f_i}(x,p)|, \  i=1,\ldots,m\Big\},
$$
$$
H^{(2)}(x,p) = \inf\Big\{H^{(1)}(x,p) ,-|\{H_{f_i},H_{f_j}\}(x,p)|,\  i,j=1,\ldots,m\Big\},
$$
$$
\begin{array}{c}
H^{(3)}(x,p) =
 \inf\Big\{H^{(2)}(x,p) ,
-|\{H_{f_i},\{H_{f_j},H_{f_\ell}\}\}(x,p)|, \  i,j,\ell=1,\ldots,m \Big\},
\end{array}
$$
and similarly for higher degrees.
}
\end{Remark}

  \begin{Example}\label{es1}{\rm Consider the  so-called {\it nonholonomic integrator}
$$
\dot y =a_1\,f_1(y)  + a_2\,f_2(y)  \,,
$$
where $\ds f_1:=\frac{\partial}{\partial x_1} - x_2\frac{\partial}{\partial x_3}$, $\ds f_2:=\frac{\partial}{\partial x_2} + x_1\frac{\partial}{\partial x_3}$.

By $[f_1,f_2] =\ds 2\frac{\partial}{\partial x_3}$ we get
$$H^{(1)}(x,p) = - \max\left\{|p_1-p_3x_2|\, , \, |p_2+p_3x_1|\right\}
$$
and
$$
H^{(2)}(x,p) = - \max\left\{|p_1-p_3x_2|\, , \, |p_2+p_3x_1|\, , \, 2|p_3|\right\}.
$$
Let $\C$ be a compact  target
 and let $U(\cdot)$ coincide with the distance ${\bf d}(\cdot)$ from $\C$. If there exists a point $\bar x\in\left(\{0\}\times\{0\}\times\rr\right)\cap \left(\rr^3\backslash \C\right)$ such that  $D^*(U)(x)\cap  (\{0\}\times\{0\}\times\rr) \neq \emptyset$,   then
$H^{(1)}(\bar x, D^*{U}(\bar x))= 0$. Therefore, in this case  the distance function  $U$ fails  to be a  degree-$1$ \cf. For instance,  this is the case when   $\C=\{x,\, |\,  |x|\leq \rho\} $ for some  $\rho\geq 0$. Indeed, $$
H^{(1)}((0,0,x_3), D^*{U}((0,0,x_3)))= 0,$$
 for all  $|x_3|> \rho$. In fact, when $\rho = 0$  no    degree-$1$  CLF of class $C^1$   exist  (see \cite{BR} and  \cite{Ri}).

Yet,   {\it $U$ is a  {\rm degree}-$2$  {\rm CLF}},  for whichever compact target $\C$. Indeed,  for every $x\in \rr^3\backslash \C$ one has  $|p|=1$  for all  $p\in D^*U(x)$, which implies
$$
H^{(2)}(x,D^*{U}(x))\leq  \max_{|p|=1}\Big\{- \max\left\{|p_1-p_3x_2|\, , \, |p_2+p_3x_1|\, , \, 2|p_3|\right\}\Big\} < 0,
 $$
for all $x\in \rr^3\backslash \C$.
In the case when    $\C=\{x,\, |\,  |x|\leq \rho\} $ for some $\rho\ge0$, $U$ is   a $C^1$ --actually,   $C^\infty$--  degree-$2$  \cf \,(hence $D^*U(x)=\{\nabla U(x)\}$).  Furthermore, one has
\bel{gammaes1}
H^{(2)}(x, D^*{U}(x))\leq-\frac{2}{3} \qquad  \forall x\in \rr^3\backslash \C.
\eeq
}
\end{Example}
 \begin{Example}\label{es2}{\rm
Now let us consider the system
$$
\dot y =a_1\,f_1(y)  + a_2\,f_2(y) 
$$
where $\ds f_1:=\frac{\partial}{\partial x_2} + x_2^2\frac{\partial}{\partial x_3}$, $\ds f_2:=\frac{\partial}{\partial x_2} + x_1^2\frac{\partial}{\partial x_3}$.

\noindent Let us compute the brackets of degree less than or equal to 3:
$$
[f_1,f_2](x) =2(x_1-x_2)\frac{\partial}{\partial x_3}  , \quad \big[f_1,[f_1,f_2]\big](x) =- \big[f_2,[f_1,f_2]\big](x)=2\frac{\partial}{\partial x_3}.
$$
 Therefore,
$$
H^{(1)}(x,p) = - \max\left\{|p_1+p_3x^2_2|\, , \, |p_2+p_3x^2_1|\right\},
$$
$$
H^{(2)}(x,p) = - \max\left\{|p_1+p_3x^2_2|\, , \, |p_2+p_3x_1^2|\, , \, 2|p_3(x_1-x_2)|\right\},
$$
and
$$
H^{(3)}(x,p) = - \max\left\{|p_1+p_3x^2_2|\, , \, |p_2+p_3x^2_1|\, , \, 2|p_3(x_1-x_2)|\, , \, 2|p_3| \right\}.
$$
For simplicity let us consider only the target $\C=\{0\}$. Once again, the distance function $U(x):=|x|$ is not a degree-$1$ \cf, since  $H^{(1)}(x,D^*{U}(x))=0$
for all   $x\in \{0\}\times\{0\}\times (\R\setminus\{0\})$.  $U$ is not even a  degree-$2$  \cf, for $H^{(2)}(x,D^*{U}(x)) = 0$ for all   $x\in \{0\}\times\{0\}\times (\R\setminus\{0\})$. However,   $|\nabla U(x)|=1$ (and $D^*U(x)=\{\nabla U(x)\}$) so that
$$
H^{(3)}(x,D^*{U}(x))\le   \max_{|p|=1}\,H^{(3)}(x,p) <0,
$$
and the distance $U$  is a  ($C^\infty$) degree-$3$  \cf.
}
\end{Example}

\begin{Remark}{\rm The control  systems in  Examples \ref{es1} and \ref{es2} verify a   Lie algebra rank condition  at each point \footnote{A system verifies the Lie algebra rank condition at $x$ if  the iterated Lie brackets linearly span $\rr^n$.}. Hence, by Chow-Rashevsky's Theorem, they are  small time locally controllable  at every point $x$, that is, the interior of the  reachable set from $x$  at any time contains $x$.  Actually, with akin arguments it is not difficult to  prove  the following general fact:

{\it  Let $\C\subset\rr^n$ be any target with compact boundary.  If a  system verifies the Lie algebra rank condition at {  every point} by means of brackets of degree $\leq k$  the distance function ${\bf d}(\cdot)$ from $\C$ is a degree-$k$ CLF.}

Indeed, since  $|p|=1$ for every $p\in D^*{{\bf d}}(x)$ and every $x\in\rr^n\backslash \C$, the Lie algebra rank condition implies that  for every such $x$ and $p$ there must exist  $w\in{\mathcal F}^{k}(x)$
such that  $\langle p,w\rangle < 0$.}
\end{Remark}

 While in the previous examples the minimum time function is finite at each point,  this is not the case of the following example, where  no trajectories  issuing  from points  $(x_1,x_2,x_3)$ such that  $x_3\neq 0$ can  reach the target (in finite time). Notice incidentally that  the  Lie algebra rank condition is violated  at each  point belonging  to the plane $x_3=0$.

\begin{Example}\label{es4}{\rm Consider the  system
$$
\dot y =a_1\,f_1(y)  + a_2\,f_2(y) \,,
$$
where $$\ds f_1:=\frac{\partial}{\partial x_1} -x_2 \phi(x_3)\frac{\partial}{\partial x_3}\quad \ds f_2:=\frac{\partial}{\partial x_2} +x_1\phi(x_3) \frac{\partial}{\partial x_3},$$ $\phi:\rr\to[0,+\infty[$ being a $C^1$ function
such that $\phi(x_3)=0$ if and only if $x_3=0$.
By
$
[f_1,f_2](x) = 2\phi(x_3)\frac{\partial}{\partial x_3}
$
one gets
$$
H^{(1)}(x,p) = - \max\left\{|p_1-p_3x_2\phi(x_3)|\, , \, |p_2+p_3x_1\phi(x_3)|\right\}
$$
and
$$
H^{(2)}(x,p) = - \max\left\{|p_1-p_3x_2\phi(x_3)|\, , \, |p_2+p_3x_1\phi(x_3)|\, , \, 2\phi(x_3)|p_3|\right\}.
$$
Let us consider again  the target $\C=\{0\}$. Also in this case the  distance function $U(x):=|x|$ is not a degree-$1$ \cf, since  $$H^{(1)}(x,D^*{U}(x))=0
\quad \forall x\in \{0\}\times\{0\}\times (\R\setminus\{0\}),$$
and  $U$ is  a still  degree-$2$  \cf, since
$$H^{(2)}(x,D^*{U}(x)) <0
$$
for all   $x\in \{0\}\times\{0\}\times (\R\setminus\{0\})$.
}
\end{Example}

Of course the fact that the Lie algebra rank condition is verified almost everywhere --as in the previous examples-- is far from being necessary for a CLF of whatever degree to exist. In fact, a system that  fails to  be  small time locally controllable  on large areas of its domain might not have any $C^1$ degree-$1$ CLF    while admitting a smooth  degree-$k$ CLF, for some $k> 1$, as illustrated in the following example:

\begin{Example}\label{es3}{\rm
Let $\varphi$, $\psi:[0,+\infty)\to [0,1]$ be   $C^\infty$ maps such that, for any $q\in\N$,
$$
\begin{array}{l}
\varphi(r)= 1\ \text{ if $r\in[2q,2q+1]$,} \\  \varphi(r)=0 \ \text{ if  $r\in[2q+(5/4),2q+(7/4)]$};
\end{array}
$$
$$
\begin{array}{l}
\psi(r)= 1\ \text{ if $r\in[2q+(7/8),2q+(17/8)]$,  } \\
   \psi(r)=0 \ \text{ if  $r\in[2q+(1/4),2q+(3/4)]\cup[0,(1/4)]$. }
   \end{array}
$$
Let us consider the control system
$$
\dot y = a_1\,f_1(y)  +a_2\, f_2(y)+  a_3\, f_3(y)\,,
$$
where
$$\begin{array}{c}
\ds
f_1=\varphi(|x|)\left(\frac{\partial}{\partial x_1} -x_2\frac{\partial}{\partial x_3}\right)
, \quad f_2= \varphi(|x|) \left(\frac{\partial}{\partial x_2} +x_1\frac{\partial}{\partial x_3}\right),
\\\,\\\ds
f_3= \psi(|x|) \left(x_1\frac{\partial}{\partial x_1} +x_2\frac{\partial}{\partial x_2}+ x_3\frac{\partial}{\partial x_3}\right) .
\end{array}
$$
   Clearly the system is not  small time locally controllable at every point $x$ such that
 $2q+(5/4)\le|x|\le 2q+(7/4)$. Let the target $\C$ coincide with the origin $\{0\}$ and, again,  let us set $U(x):={\bf d}(x) =|x|$. For every $q\in\N$ one has
 $
 H^{(1)}(x,D^*{U}(x))=-|x| \quad\text{for all $x$ such that  $2q+(5/4)\le|x|\le 2q+(7/4)$.}
 $
 Furthermore, $H^{(1)}(x,D^*{U}(x))=0$ for   every $x$ such that  $x_1=x_2=0$ and $|x_3|\le 1$ or  $2q+(1/4)\le|x_3|\le 2q+(3/4)$, $q\ge 1$\footnote{Actually, there are no $C^1$ degree-$1$ \cf, as it can be proved  by noticing that the system coincides with the nonholonomic integrator in a whole neighborhood of the target.}. However, one easily checks that
 $$
H^{(2)}(x,D^*{U}(x))= -\frac{1}{|x|}\, \max\left\{|x_1-x_3x_2|\, , \, |x_2+x_3x_1|\, , \, 2|x_3|\right\} \le -\frac{2}{3}   ,
 $$
 for all  $x\in \R^3\setminus\C$, so that $U$ is a ($C^\infty$) degree-$2$ Lyapunov function.
}\end{Example}

%%%%%%%%%%%%%%%%%%%%%%%%%%%%%%%%%%%%%%%%%%%%%%%%%%%%%%%%%%%%%%%%%%%%%%%%%%%%%%%%%
\section{Proof of Theorem \ref{Th3.1gen}}\label{S3}

The case when $k=1$ has already been proved in \cite{MR}, where the hypotheses are even weaker than the ones assumed here (for instance, vector fields are allowed to be unbounded near the target). So we will always assume $k>1$: in particular, there will be a unique trajectory $y_x(\cdot, \alpha)$ corresponding to an initial condition $x$  and a control $\alpha(\cdot)$.
\subsection{Preliminary facts}
To begin with, let us  point out  that  the $0$ in   the dissipative  relation   can be replaced by a nonnegative function of $U$:

\begin{Proposition}\label{claim1}  Let  $U:\overline{\R^n\setminus\C} \to\R$   be a continuous function, such that $U$ is   locally  semiconcave, positive definite and proper on  ${\R^n\setminus\C}$.  Then the conditions (i) and (ii) below are equivalent:

\begin{itemize}\item[(i)]  $U$ verifies
\bel{c22} H^{({{2}})}(x, D^*U(x))<0\eeq
for all $x\in\R^n\setminus\C$;

\item[(ii)] for every $\sigma>0$  there exists a continuous, strictly increasing, function  $\gamma:[0,+\infty)\to:[0,+\infty)$ such that
\bel{c2'}
H^{({{2}})}(x,D^*U(x)) \le-\gamma(U(x))
\eeq
for all $x\in U^{-1}((0,2\sigma])$.
\end{itemize}
\end{Proposition}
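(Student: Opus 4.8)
The plan is to prove the two implications separately, with essentially all the work concentrated in $(i)\Rightarrow(ii)$. The implication $(ii)\Rightarrow(i)$ is immediate: given $x\in\R^n\setminus\C$, choose $\sigma$ with $U(x)\le 2\sigma$; since $U(x)>0$ and $\gamma$ is strictly increasing with values in $[0,+\infty)$, one has $\gamma(U(x))>\gamma(0)\ge 0$, so \eqref{c2'} forces $H^{(2)}(x,p)\le -\gamma(U(x))<0$ for every $p\in D^*U(x)$, which is exactly \eqref{c22}.

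For $(i)\Rightarrow(ii)$ I would first encode the pointwise negativity into a single scalar function. Set
$$
\eta(x):=\max_{p\in D^*U(x)}H^{(2)}(x,p),\qquad x\in\R^n\setminus\C.
$$
The maximum is attained and $\eta(x)<0$ because $D^*U(x)$ is nonempty and compact while $H^{(2)}(x,\cdot)$ is continuous, so that $(i)$ is precisely the statement $\eta<0$. The key regularity fact is that $\eta$ is upper semicontinuous: this follows from the upper semicontinuity and local boundedness of the multifunction $x\mapsto D^*U(x)$ (recall that $U$, being locally semiconcave, is locally Lipschitz, so its limiting gradients stay in a compact set) together with the joint continuity of $H^{(2)}$, via a routine subsequence extraction of maximizers $p_n\in D^*U(x_n)$ whose limit again lies in $D^*U(x)$ by the closed-graph property of $D^*U$.

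Next I would make the bound quantitative and monotone. Fix $\sigma>0$. For $r\in(0,2\sigma]$ the set $U^{-1}([r,2\sigma])$ is compact by properness of $U$ and is contained in $\R^n\setminus\C$ (since $U\ge r>0$ there); on it the lower semicontinuous, strictly positive function $-\eta$ attains a positive minimum, which I call $\lambda(r)>0$. As $r$ decreases these sets increase, so $\lambda$ is non-decreasing, and by construction $\eta(x)\le -\lambda(U(x))$ whenever $U(x)\in(0,2\sigma]$. It then remains to replace the possibly discontinuous $\lambda$ by a genuinely continuous, strictly increasing minorant: the function
$$
\gamma(s):=\frac{1}{2\sigma}\int_0^s\lambda(\tau)\,d\tau\quad(s\in[0,2\sigma]),
$$
extended affinely with slope $1$ for $s>2\sigma$, is continuous, strictly increasing, satisfies $\gamma(0)=0$, and obeys $\gamma(s)\le \tfrac{s}{2\sigma}\lambda(s)\le\lambda(s)$ because $\lambda$ is non-decreasing and $s\le 2\sigma$. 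Hence $H^{(2)}(x,p)\le\eta(x)\le-\lambda(U(x))\le-\gamma(U(x))$ for every $x\in U^{-1}((0,2\sigma])$ and every $p\in D^*U(x)$, which is \eqref{c2'}.

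The only genuinely delicate point is the upper semicontinuity of $\eta$ near the target, i.e.\ ensuring that the maximizing limiting gradients remain in a compact set and that their limit is still a limiting gradient; everything else is compactness combined with the properties of $D^*U$ and $H^{(2)}$ already recorded in the preliminaries. I deliberately keep the estimate on the compact sets $U^{-1}([r,2\sigma])$ rather than on a fixed neighborhood of $\partial\C$, precisely so that $\gamma$ is allowed to degenerate, $\gamma(s)\to0$ as $s\to0^+$; this degeneration is unavoidable, since $\eta$ itself may tend to $0$ as $x\to\partial\C$. I expect the same argument to go through verbatim with $H^{(2)}$ replaced by any $H^{(k)}$.
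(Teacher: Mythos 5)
Your argument is correct and is essentially a fleshed-out version of the paper's own proof, which simply remarks that (i)$\implies$(ii) follows from the upper semicontinuity of $x\mapsto D^*U(x)$ and of $H^{(2)}$ on the compact sets $U^{-1}([u,2\sigma])$ and defers the details to Proposition 3.1 of \cite{MR}; your marginal function $\eta$, the minima $\lambda(r)$ on the slabs $U^{-1}([r,2\sigma])$, and the integral regularization of $\lambda$ into a continuous, strictly increasing $\gamma$ implement exactly that scheme. One boundary case needs a one-line patch: since $\C$ is only assumed to have compact boundary, $\R^n\setminus\C$ may be bounded and hence $U$ bounded, in which case, for $2\sigma>\max U$, the sets $U^{-1}([r,2\sigma])$ are empty for $r\in(\max U,2\sigma]$, so $\lambda(r)$ is not defined there (its infimum is $+\infty$) and your integral formula would give $\gamma=+\infty$ beyond $\max U$; replacing $\lambda$ by $\min\{\lambda,1\}$ (or extending $\lambda$ by its last finite value) restores the construction without affecting the required inequality, which only involves points where $U(x)\le\max U$.
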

Notice that  the only non trivial implication, namely (i)$\implies$(ii), is a simple consequence of the upper semicontinuity of the set-valued map $x\mapsto D^*U(x)$  on the compact sets $U^{-1}([u,2\sigma])$ ($u\in(0,2\sigma)$) and of the upper semicontinuity of  $H^{(k)}$. For a detailed proof, we refer  to \cite[Proposition 3.1]{MR}.

\begin{Remark}\label{gradienti}{\rm In a  good deal of  literature on control Lyapunov functions one utilizes   the   proximal subdifferential $\partial_PU(x)$   as a nonsmooth substitute for the derivative of $U$ (see \cite{CLSW}). However,  in view of Proposition \ref{claim1}, our  using  the set of  limiting  gradients  $D^*U(x)$  (which coincides with the limiting subdifferential $\partial_LU(x)$, see Subsection \ref{prel}) is equivalent to using  the  proximal subdifferential.  Indeed, for any $N>0$ condition  $H^{({{k}})}(x,D^*U(x))  \le-\gamma(Ux))$ is equivalent to $H^{({{k}})}(x,\partial_LU(x))  \le-\gamma(Ux))$ for any $x\in U^{-1}((0, N])$. On the other hand,   by the construction of   $\partial_LU(x)$ and by the  continuity of $H^{(k)}(\cdot)$,  this   holds true as soon as  $H^{({{k}})}(x,\partial_PU(x))  \le-\gamma(Ux))$.}
\end{Remark}

 Secondly,  basic properties of the semiconcave functions  imply  the following fact (see e.g. \cite{CS}):
\begin{Lemma}\label{Lscv} Let  $U:\overline{\R^n\setminus\C} \to\R$   be a continuous function, such that $U$ is   locally  semiconcave, positive definite, and proper on  ${\R^n\setminus\C}$.
Then for any compact set $\mathcal{K}\subset \R^n\setminus\C$ there exist some positive constants  $L$ and $\rho$ such that, for any $x\in \mathcal{K}$
 \footnote{The inequality (\ref{scv}) is usually formulated with the proximal superdifferential  $\partial^P F$. However, this does not make a difference here since $\partial^P F=\partial_C F=co D^* F$ as soon as $F$ is locally semiconcave. Hence (\ref{scv}) is true in particular for $D^*U$. },
\bel{scv}
\begin{array}{l}
|p|\le L    \quad \forall p\in D^*U(x),  \\ \, \\
U(\hat x)-U(x)\le  \langle p,\hat x-x \rangle+\rho|\hat x-x|^2,
\end{array}
\eeq
for any point  $\hat x\in\mathcal{K}$ such that $[x,\hat x]\subset  \mathcal{K}$.

Moreover, if $\mathcal{K}_1,\mathcal{K}_2\subset \R^n\setminus\C$ are compact subsets and  $\mathcal{K}_1\subseteq \mathcal{K}_2$, we can choose the corresponding constants   $L_1$,  $\rho_1$ and  $L_2$,  $\rho_2$
such that  $L_1\leq L_2$ and $\rho_1\leq \rho_2$.
\end{Lemma}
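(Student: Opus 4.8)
The plan is to reduce the statement to two essentially independent facts about the (locally semiconcave, hence locally Lipschitz) function $U$: the gradient bound $|p|\le L$, which is a direct consequence of local Lipschitzness, and the quadratic supergradient inequality, which I would obtain by a one-dimensional reduction along the segment $[x,\hat x]$. Throughout I fix a compact set $\mathcal{K}'$ with $\mathcal{K}\subset\mathrm{int}\,\mathcal{K}'$ and $\mathcal{K}'\subset\R^n\setminus\C$; such a set exists because $\mathcal{K}$ is compact and $\R^n\setminus\C$ is open. By local semiconcavity and local Lipschitzness (recall that locally semiconcave functions are locally Lipschitz), $U$ admits on the compact set $\mathcal{K}'$ a Lipschitz constant $L$ and a semiconcavity constant $\rho_0$. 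I also record the collar width $\delta:=\mathrm{dist}(\mathcal{K},\R^n\setminus\mathrm{int}\,\mathcal{K}')>0$.

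For the gradient bound, if $x\in\mathcal{K}$ and $p\in D^*U(x)$, then by Definition \ref{D*} one has $p=\lim_k\nabla U(x_k)$ with $x_k\to x$ and $x_k\in DIFF(U)$; for $k$ large $x_k\in\mathcal{K}'$, so $|\nabla U(x_k)|\le L$, and passing to the limit gives $|p|\le L$. For the quadratic inequality I first establish it at points of differentiability. Fix $z\in DIFF(U)$ and $\hat x$ with $[z,\hat x]\subset\mathcal{K}'$, and set $h(t):=U\big(z+t(\hat x-z)\big)$ for $t\in[0,1]$. Applying the defining semiconcavity inequality on $\mathcal{K}'$ to pairs of points of the segment shows that $t\mapsto h(t)-2\rho_0|\hat x-z|^2\,t^2$ is midpoint-concave, hence (by continuity of $U$) concave on $[0,1]$. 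Since $U$ is differentiable at $z$, this concave function has right derivative $\langle\nabla U(z),\hat x-z\rangle$ at $t=0$; as a concave function lies below its supporting line at $0$, evaluating at $t=1$ yields $U(\hat x)-U(z)\le\langle\nabla U(z),\hat x-z\rangle+2\rho_0|\hat x-z|^2$.

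To pass to a limiting gradient $p\in D^*U(x)$ with $[x,\hat x]\subset\mathcal{K}$, I take $x_k\to x$, $x_k\in DIFF(U)$, $\nabla U(x_k)\to p$. Each point $(1-t)x_k+t\hat x$ of $[x_k,\hat x]$ lies within $|x_k-x|$ of the corresponding point of $[x,\hat x]\subset\mathcal{K}$, so once $|x_k-x|<\delta$ the whole segment $[x_k,\hat x]$ lies in $\mathcal{K}'$ and the previous step applies at $z=x_k$. Letting $k\to\infty$ and using the continuity of $U$ gives $U(\hat x)-U(x)\le\langle p,\hat x-x\rangle+2\rho_0|\hat x-x|^2$, which is (\ref{scv}) with $\rho:=2\rho_0$. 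Alternatively, one may invoke the identities $\partial^P U=\partial_C U=co\,D^*U$ recorded in the footnote to (\ref{scv}), together with the classical superdifferential form of the estimate for semiconcave functions in \cite{CS}, to deduce the inequality for $p\in D^*U(x)\subseteq\partial^P U(x)$ directly.

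For the monotonicity clause, given $\mathcal{K}_1\subseteq\mathcal{K}_2$ I would choose the collars so that $\mathcal{K}_1'\subseteq\mathcal{K}_2'$; then any Lipschitz and semiconcavity constants valid on $\mathcal{K}_2'$ are a fortiori valid on $\mathcal{K}_1'$, so one may take $L_1\le L_2$ and $\rho_1\le\rho_2$. The only delicate point in the whole argument is that $\mathcal{K}$ need not be convex, so that the defining semiconcavity inequality cannot be invoked across arbitrary pairs of points of $\mathcal{K}$; the one-dimensional reduction along $[x,\hat x]$ together with the collar $\mathcal{K}'$ is precisely what circumvents this, keeping the nearby segments $[x_k,\hat x]$ needed for the limiting-gradient passage inside a region where $U$ is semiconcave with a uniform constant.
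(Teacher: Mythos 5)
Your proof is correct, but it takes a genuinely different route from the paper: the paper gives no argument at all for Lemma \ref{Lscv}, presenting it as a known consequence of basic properties of semiconcave functions with a citation to \cite{CS}, and disposing of the $D^*U$ versus $\partial^P U$ issue in the footnote via the identities $\partial^P U=\partial_C U=co\, D^*U$ valid for locally semiconcave functions. You instead give a self-contained elementary proof: the gradient bound comes from the Lipschitz constant on a compact collar $\mathcal{K}'$ (strictly speaking you should take $x_k\in \mathrm{int}\,\mathcal{K}'$, which holds for $k$ large since $x_k\to x\in\mathcal{K}\subset\mathrm{int}\,\mathcal{K}'$); the quadratic estimate is first proved at differentiability points by one-dimensional reduction --- midpoint semiconcavity along $[z,\hat x]$ makes $t\mapsto h(t)-2\rho_0|\hat x-z|^2t^2$ midpoint-concave, hence concave by continuity, and the supporting line at $t=0$ exists because $U$ is differentiable at $z$ --- and is then transported to limiting gradients by approximation, the collar width $\delta$ guaranteeing that the nearby segments $[x_k,\hat x]$ stay inside the region where the semiconcavity constant $\rho_0$ is valid. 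The details check out: the coefficient $2\rho_0|\hat x-z|^2$ is exactly what cancels the midpoint defect of the quadratic (the midpoint defect of $ct^2$ is $c(t_1-t_2)^2/2$), the limit passage uses only continuity of $U$ and $\nabla U(x_k)\to p$, and the monotonicity clause is handled correctly by choosing $\mathcal{K}_1'\subseteq\mathcal{K}_2'$ (for instance $\mathcal{K}_1'=\mathcal{K}_2'$), so that constants valid on the larger collar restrict to the smaller one. What your route buys is independence from the machinery of proximal and Clarke subdifferentials and from the general theory in \cite{CS}: you work directly with the paper's midpoint Definition \ref{sconc} and with Definition \ref{D*}, at the mild cost of a non-optimal constant $\rho=2\rho_0$ (harmless, since the lemma only asserts existence). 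What the paper's route buys is brevity. Your closing alternative --- invoking the footnote's identities together with the classical superdifferential estimate --- is precisely the paper's intended argument, so you have in fact covered both approaches.
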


\subsection{ A degree-$k$  "feedback"}
Let us  introduce a notion  of  {\rm degree-$k$ feedback}.  For a given $\sigma>0$, let  ${{\gamma}}$  be a  function as in
Proposition \ref{claim1}, and let $x\mapsto p(x)$ be a selection of  $x\mapsto D^*U(x)$ on $U^{-1}((0,2\sigma])$, so that
$$
H^{({{k}})}(x,p(x)) \le-\gamma(U(x)) \qquad \forall x\in U^{-1}((0,2\sigma]).
$$

 \begin{Definition}\label{Dfed} For a given  $\sigma>0$, let  ${{\gamma}}(\cdot)$, and $p(\cdot)$ be chosen as above.
A selection  $$ \v :  U^{-1}((0,2\sigma]) \to \rr^n , \qquad
 x  \mapsto \v(x)\in \F^{({{k}})}(x)$$  is called  a {\em  degree-$k$ feedback}   (corresponding to $U$, $\sigma$, ${{\gamma}}(\cdot)$, and $p(\cdot)$)  if for every $x\in U^{-1}((0,2\sigma])$  there exists a positive integer $h\le k$ such that
\bel{feed1}
\left\{\begin{array}{l}
\v(x)\in\F^{(h)}(x), \\\,\\
 \Big\langle p(x) ,\, \v(x)  \Big\rangle \le -{{\gamma}}(U(x)),   \\ \,\\
 \text{and, if $h>1$:} \quad H^{({{h-1}})}(x,p(x))>-\gamma(U(x)).
\end{array}\right.
\eeq
The number  $h$ will be called the {\em degree of the feedback  $\v$ at  $x$}.
\end{Definition}

Let   us momentarily assume that there exists $\hat M\geq 0$ such that
\bel{boundfield}
\|f_i\|_{{k-1}} \leq \hat M\qquad \forall i=1,\ldots,m
\eeq
in the whole set $\R^n\setminus\C$, which, in view of the compactness of the control set $A,$  implies that there is $M\geq 0$ verifying
\bel{fieldbounded}
\|X\|_0\leq M
\eeq
for any iterated bracket  $X$ in  $\F^{(k)}$. Under this assumption
  one can regard each vector $\v(x)$ as a {\it tangent} vector to a curve that is a  suitable composition of  flows, as stated in the following result (see e.g. \cite{FR1}, \cite{FR2}):
\begin{Lemma}\
\label{applemma} Under assumption  \eqref{boundfield} there exists a real constant  $c>0$ such that for any $x\in \R^n\setminus\C$, any feedback    $\v(\cdot)$ of degree $h$ at $x$,  and any $t>0$,
 one can find a control
 $\alpha_t:[0,t]\to A$  such that
\begin{itemize}
\item[(i)] $\alpha_t(\cdot)$ is constant
on intervals $\displaystyle\left[\frac{jt}{r} ,\frac{ (j+1)t}{r}\right)$, $j=0,\ldots,r-1$;
\item[(ii)]  the estimate
\bel{yt}
\left|y_x(t,\alpha_t)-x-\frac{\v(x) }{r^{h}}\,t^h\right|\leq\frac{c}{r^{h}}\, t^{h+1}
\eeq
holds true, where $r$ is an integer depending on  the formal Lie bracket corresponding to $\v(x)$ and is increasing with the degree.
\end{itemize}
\end{Lemma}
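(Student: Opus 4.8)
The plan is to read the trajectory $y_x(\cdot,\alpha_t)$ as an \emph{exact} concatenation of elementary flows and then to match that concatenation, order by order, with the recursive commutator structure of the formal bracket defining $\v(x)$. First I would observe that, whenever $\alpha_t$ takes a constant value $\pm e_i\in A$ on a subinterval, the Cauchy problem \eqref{odeL} reduces to $\dot y=\pm f_i(y)$; hence, once $[0,t]$ is divided into $r$ equal subintervals of length $s:=t/r$ and on the $j$-th of them $\alpha_t$ is set equal to $\epsilon_j e_{i_j}\in A$, the endpoint is literally
$$
y_x(t,\alpha_t)=\Phi^{\epsilon_r s}_{f_{i_r}}\circ\cdots\circ\Phi^{\epsilon_1 s}_{f_{i_1}}(x),
$$
where $\Phi^\tau_{f_i}$ denotes the time-$\tau$ flow of $f_i$ (well defined and complete on $\R^n\setminus\C$ thanks to the $C^1$-regularity and the bound \eqref{fieldbounded}). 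Thus producing the control amounts to choosing a word $(\epsilon_1,i_1),\dots,(\epsilon_r,i_r)$, which is exactly the combinatorial datum carried by a formal iterated bracket.

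Then I would prove, by induction on the degree $h$ of the formal bracket $B$ associated with $\v(x)$, the asymptotic formula $\Lambda_B(s,x)=x+s^{h}B(x)+O(s^{h+1})$ uniformly in $x$, where $\Lambda_B(s,\cdot)$ is the flow concatenation built recursively as follows. For $B=\pm f_i$ (so $h=1$) one sets $\Lambda_B(s,\cdot):=\Phi^{\pm s}_{f_i}$, and the estimate is the Taylor expansion $\Phi^{\pm s}_{f_i}(x)=x\pm s\,f_i(x)+O(s^2)$, with remainder controlled by $\|f_i\|_1$. For $B=[B_1,B_2]$ with $\deg B_1=h_1$, $\deg B_2=h_2$, $h_1+h_2=h$, one sets
$$
\Lambda_B(s,\cdot):=\Lambda_{B_2}(s,\cdot)^{-1}\circ\Lambda_{B_1}(s,\cdot)^{-1}\circ\Lambda_{B_2}(s,\cdot)\circ\Lambda_{B_1}(s,\cdot),
$$
the inverse concatenations corresponding to reversed, sign-changed controls, again with values in $A$. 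The leading term follows from the elementary group-commutator identity $Q^{-1}P^{-1}QP=\mathrm{id}+(Db\cdot a-Da\cdot b)+\dots$ for near-identity maps $P=\mathrm{id}+a$, $Q=\mathrm{id}+b$: taking $a=s^{h_1}B_1$, $b=s^{h_2}B_2$ yields $s^{h}(DB_2\cdot B_1-DB_1\cdot B_2)(x)=s^{h}B(x)$ by the very definition of the Lie bracket. With $s=t/r$ and $r$ the total number of elementary flows (a fixed function of the formal structure of $B$), this is exactly the main term in \eqref{yt}.

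The main obstacle, and the bulk of the work, is the error bookkeeping in the inductive step. The maps $\Lambda_{B_1}$, $\Lambda_{B_2}$ are only approximate flows, each carrying an inductive error $O(s^{h_i+1})$; moreover, a naive expansion of the composition would produce intermediate-order terms $s^{3h_2},\,s^{2h_1+h_2},\dots$ that threaten to dominate $s^{h}$ whenever $h_1\ne h_2$. What rescues the estimate is the group-commutator structure of $\Lambda_B$: writing each factor, to the relevant order, as the flow of a vector field $V_i=s^{h_i}B_i+O(s^{h_i+1})$, a Baker--Campbell--Hausdorff expansion shows that the logarithm of $\Lambda_B(s,\cdot)$ consists only of iterated brackets each containing at least one $V_1$ and one $V_2$; the lowest is $[V_1,V_2]=s^{h}[B_1,B_2]+O(s^{h+1})$, and every other is $O(s^{h+\min(h_1,h_2)})=O(s^{h+1})$. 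Thus the dangerous intermediate-order terms cancel, and what survives is $s^{h}B(x)+O(s^{h+1})$. I would make this rigorous not through the formal series but by Taylor-expanding each elementary flow and estimating the remainders by a Gronwall argument, using that $f_1,\dots,f_m$ and their derivatives up to order $k-1$ are bounded by $\hat M$ on $\R^n\setminus\C$ and that $h\le k$ guarantees the availability of every derivative entering a degree-$h$ bracket; the uniformity in $x$ of these bounds is precisely what produces a constant independent of $x$. The sharp quantitative remainder estimates are those of \cite{FR1}, \cite{FR2}, to which I would appeal.

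Finally, for the uniformity of $c$ over all feedbacks of degree $h\le k$, I would use that, for fixed $m$ and $k$, there are only finitely many formal iterated brackets of degree $\le k$. Assigning to each its own integer $r$ (increasing with the degree, as required) and its own constant $c_B$ coming from the induction, one sets $c:=\max_B c_B$ and obtains a single constant $c>0$ for which \eqref{yt} holds at every $x\in\R^n\setminus\C$, every $t>0$, and every feedback $\v(\cdot)$, with the piecewise-constant structure (i) built into the construction.
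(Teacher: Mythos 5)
Your overall strategy is the right one, and it is essentially all the paper itself offers: the paper does not prove Lemma \ref{applemma}, it simply points to \cite{FR1}, \cite{FR2} --- the same references you invoke for the sharp remainder estimates. Your reconstruction of what lies behind that citation is faithful: the group-commutator recursion $\Lambda_{[B_1,B_2]}=\Lambda_{B_2}^{-1}\circ\Lambda_{B_1}^{-1}\circ\Lambda_{B_2}\circ\Lambda_{B_1}$, with every factor run at the common elementary time $s=t/r$, reproduces exactly the paper's combinatorics (the footnote's recursion $r([B_1,B_2])=2(r_1+r_2)$, and, for $[[f_1,f_2],f_3]$, precisely the ten-piece control displayed after the lemma), and your identification of the leading term via $Db\cdot a-Da\cdot b$ agrees with the paper's bracket convention $[X,Y]=DY\cdot X-DX\cdot Y$.

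The one genuine soft spot is that the inductive hypothesis you state --- the sup-norm estimate $\Lambda_B(s,x)=x+s^hB(x)+O(s^{h+1})$ --- is too weak to close the induction. In the step for $[B_1,B_2]$, writing $p=\Lambda_{B_1}-\mathrm{id}$ and $\Lambda_{B_2}=\mathrm{id}+s^{h_2}B_2+\beta$, one must estimate differences such as $\beta(x+p(x))-\beta(x)$. A sup-norm bound on $\beta$ gives only $O(s^{h_2+1})$ for this difference, while the trivial Lipschitz bound $\mathrm{Lip}(\beta)=O(s)$ gives only $O(s^{h_1+1})$; neither is $O(s^{h_1+h_2+1})$, which is what \eqref{yt} requires. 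So the induction must carry, alongside the pointwise estimate, a Lipschitz bound of the form $\mathrm{Lip}\bigl(\Lambda_B(s,\cdot)-\mathrm{id}-s^hB\bigr)=O(s^{h+1})$; correspondingly each inductive step consumes one more derivative of the fields, and the sharp $O(s^{h+1})$ rate at the top degree really uses Lipschitz continuity of the highest-order derivatives involved (which is the setting of \cite{FR1}, \cite{FR2}). Relatedly, the BCH cancellation you describe cannot be taken from the formal series, since the $\Lambda$'s are not exact flows; it has to be implemented through an exact identity, e.g.\ writing the commutator deviation as $Q^{-1}P^{-1}(QP(x))-Q^{-1}P^{-1}(PQ(x))$, controlling it by $\mathrm{Lip}(Q^{-1}P^{-1}-\mathrm{id})\cdot|QP(x)-PQ(x)|$, and then expanding $QP-PQ$ as you do. None of this changes your architecture; it is exactly the quantitative content you delegate to the references --- which, again, is all the paper itself does.
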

For instance, $r=1,4,10$ if $h=1,2,3$, respectively. In particular,  if   $\v(x) = [[f_1,f_2], f_3](x)$  one sets
$$
\alpha_t(s) :=\left\{
\begin{array}{ll}
 & e_1 \quad\text{if $s\in \left[0,t/10\right)\cup \left[6t/10,7t/10\right)$} \\
 & e_2 \quad\text{if $s\in \left[t/10,2t/10\right)\cup \left[5t/10,6t/10\right)$}  \\
 & e_3 \quad\text{if $s\in \left[4t/10,5t/10\right)$}  \\
  -&e_1\quad\text{if $s\in \left[2t/10,3t/10\right)\cup \left[8t/10,9t/10\right) $}  \\
    -&e_2 \quad\text{if $s\in  \left[3t/10,4t/10\right)\cup\left[7t/10,8t/10\right)$}  \\
  -&e_3 \quad\text{if $s\in \left[9t/10,t\right)$.}
\end{array}\right.
$$
Let us point out that  one can have different $r$'s for feedbacks having the same degree \footnote{Precisely: For each formal bracket $B$, the corresponding  $r=r(B)$ is defined recursively: one sets $r(B)=1$ if $B$ has degree $1$, while, if $B=[B_1,B_2]$ and $r_1=r(B_1)$, $r_2=r(B_2)$ one sets $r(B):=2(r_1+r_2)$. For instance, $r([g_1,g_2])=4$, $r([g_1,[g_2,g_3]])=10$, $r([g_1,[g_2,[g_3,g_4]]])=22$ and $r([[g_1,g_2],[g_3,g_4]]=16$.}.

\subsection{ A step of degree $h\leq k$}

Now let us choose   $z\in U^{-1}((0,\sigma])$  and  a feedback   $\v$ of degree ${{k}}$ (surely existing by \eqref{c2h}). Let the feedback  $\v$ have  degree $h$ at $z$. We shall rely on the following result.

\begin{Claim}\label{claim2h}  Let $U(\cdot)$,  $\sigma$, $ \gamma(\cdot)$ and  $p(\cdot)$ as above. Furthermore let $\v(\cdot)$ be a degree-$k$ feedback corresponding to these data.
Then there exists a time-valued  function
$$
\tau:(0,\sigma]\times \{1,\dots, {{k}}\}\to (0,1]\,,
$$
 such that
\begin{itemize}
\item[i)] $j\mapsto \tau(u,j)^j$ and  $j\mapsto\tau(u,j)^{j-1}$ are decreasing  for every $u\in ]0,\sigma]$,
\item[ii)]  $u\mapsto \tau(u,j)$ is increasing  for every $j\in\{1,\dots,k\}$,  and
\item[iii)] for all  $z\in U^{-1}((0,\sigma])$ with a feedback    $\v(\cdot)$ of degree $h$ at $z$,  one has
 $$
U(y_z(t,\alpha_t))-U(z)\le  - \frac{\gamma(U(z ))}{2}  \left(\frac{t}{r}\right)^{h}\,\quad \forall t\in\left[0,\,\tau(U(z),h)\right]\,,
$$
where $r$ and   $y_z(\cdot,\alpha_t)$  are an integer and a  trajectory  associated to  $\v(z)$ as in Lemma \ref{applemma}.
\end{itemize}
\end{Claim}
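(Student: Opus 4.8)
The plan is to establish the decay estimate (iii) by a single application of the semiconcavity inequality of Lemma~\ref{Lscv}, fed by the high-order flow expansion of Lemma~\ref{applemma}, and then to read the monotonicity requirements (i)--(ii) directly off the explicit threshold $\tau$ that the estimate produces. Throughout I work under the momentary boundedness assumptions \eqref{boundfield}--\eqref{fieldbounded}, so that $|f_i|\le\hat M$ and every bracket in $\F^{(k)}$ has norm $\le M$.

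First I would fix $z\in U^{-1}((0,\sigma])$, set $u:=U(z)$, let $h$ be the degree of the feedback $\mathbf v$ at $z$, and let $r$, $\alpha_t$, $y_z(\cdot,\alpha_t)$ be the data attached to $\mathbf v(z)$ by Lemma~\ref{applemma}. The heart of the argument is to confine the trajectory to a fixed compact set carrying uniform semiconcavity constants. To this end I work in $\mathcal K_u:=U^{-1}([u/2,2\sigma])$, a compact subset of $\R^n\setminus\C$, and denote by $L_u,\rho_u$ the constants furnished by Lemma~\ref{Lscv}. Since the fields are globally bounded, the trajectory obeys $|y_z(t,\alpha_t)-z|\le\hat M t$; because $U$ is locally Lipschitz it then varies at a controlled rate, so a continuity/exit-time argument shows that for $t$ up to a time of order $u/(L_u\hat M)$ both $y_z(t,\alpha_t)$ and the whole segment $[z,y_z(t,\alpha_t)]$ remain in $\mathcal K_u$. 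The family $\mathcal K_u$ shrinks as $u$ grows, so by the last assertion of Lemma~\ref{Lscv} the constants $L_u,\rho_u$ can be chosen nonincreasing in $u$; this is exactly what will later deliver (ii).

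Next, for such $t$ I apply the semiconcavity inequality at $z$ with $\hat x=y_z(t,\alpha_t)$ and $p=p(z)\in D^*U(z)$, and substitute the expansion $y_z(t,\alpha_t)-z=r^{-h}\mathbf v(z)\,t^h+E$, $|E|\le c\,r^{-h}t^{h+1}$. Using $\langle p(z),\mathbf v(z)\rangle\le-\gamma(u)$ from \eqref{feed1}, together with $|p(z)|\le L_u$ and $|\mathbf v(z)|\le M$, I obtain
\begin{equation}
U(y_z(t,\alpha_t))-U(z)\ \le\ -\frac{t^h}{r^h}\,\gamma(u)\ +\ L_u\,c\,\frac{t^{h+1}}{r^h}\ +\ \rho_u\,(M+c)^2\,\frac{t^{2h}}{r^{2h}}.
\end{equation}
The decisive point is that, by controlling the quadratic remainder with the \emph{refined} estimate $|y_z-z|\le (M+c)r^{-h}t^h$ (and not with the crude bound $|y_z-z|\le\hat M t$), both error terms are of order strictly higher than the leading term $t^h/r^h$; this is essential precisely at $h=2$, where the crude bound would make the quadratic term the same order as the leading one. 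Dividing through by $t^h/r^h$ reduces the target inequality to $L_u c\,t+\rho_u(M+c)^2\,t^h/r^h\le\gamma(u)/2$, and since $t\le1$, $r\ge1$ force $t^h/r^h\le t$, it suffices that $t\le\gamma(u)\big/\big(2(L_uc+\rho_u(M+c)^2)\big)$.

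Finally I set, independently of $h$,
\begin{equation}
\tau(u,h):=\min\left\{1,\ \frac{u}{2L_u\hat M},\ \frac{\gamma(u)}{2\big(L_uc+\rho_u(M+c)^2\big)}\right\}\ \in(0,1].
\end{equation}
Property (iii) holds by the computation above; property (i) is immediate, since $\tau(u,\cdot)\le1$ makes both $j\mapsto\tau(u,j)^{j}$ and $j\mapsto\tau(u,j)^{j-1}$ nonincreasing; and property (ii) follows because $\gamma(u)$ and $u$ increase while $L_u,\rho_u$ decrease in $u$, so each entry of the minimum is nondecreasing in $u$ (if strict monotonicity is wanted, one replaces the constant cap $1$ by any strictly increasing $\eta:(0,\sigma]\to(0,1]$, as the minimum of strictly increasing functions is strictly increasing, and shrinking $\tau$ never harms (iii)). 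I expect the main obstacle to be the bookkeeping of these compact sets and constants --- in particular verifying that the trajectory and its connecting segment stay in a region where $L_u,\rho_u$ are uniform and monotone in $u$ --- rather than the estimate itself, whose only delicate feature is the use of the higher-order flow expansion to render the semiconcavity remainder negligible against the leading Lie-bracket term.
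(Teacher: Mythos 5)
Your proposal is correct and takes essentially the same route as the paper's own proof: apply the semiconcavity inequality of Lemma \ref{Lscv} at $p(z)$, substitute the expansion \eqref{yt} of Lemma \ref{applemma}, invoke the feedback inequality \eqref{feed1}, and absorb both error terms for small $t\le 1$ --- indeed your final threshold $\gamma(u)/\bigl(2(L_u c+\rho_u(M+c)^2)\bigr)$ is literally the paper's $\check\tau(u)$ in \eqref{tauminore}, and your observation that the refined bound $|y_z(t,\alpha_t)-z|\le (M+c)\,t^h/r^h$ (rather than the crude $\hat M t$) is what keeps the quadratic remainder subordinate to the leading term is exactly the point of the paper's estimate \eqref{tau}. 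The only divergence is the confinement bookkeeping, and it is harmless since the Claim asks only for \emph{some} $\tau$ with properties (i)--(iii): the paper traps the chord $[z,y^t_z(t)]$ in the distance annulus $\bigl\{x:\ {\bf d}(U^{-1}(u))/2\le {\bf d}(x)\le\nu\bigr\}$ using \eqref{yt} itself, which is where its degree-dependent term $\bar\tau(u,j)$ with the $j$-th root originates, whereas you trap it in the superlevel set $U^{-1}([u/2,2\sigma])$ by a crude speed bound plus an exit-time argument, which makes your $\tau$ degree-independent and, as a bonus, keeps the whole trajectory inside a compact set away from $\C$. One small caveat: your parenthetical recipe for restoring \emph{strict} monotonicity in (ii) does not quite work (the minimum of a strictly increasing function with merely nondecreasing ones need not be strictly increasing), but this is immaterial, since weak monotonicity is all the downstream argument uses and the paper's own $\tau$ in \eqref{t2} is likewise only weakly monotone in $u$.
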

 \begin{proof} Let $\nu>0$ be such that $U^{-1}((0,2\sigma])\subset B\left(\C,  \frac{\nu}{2} \right)$ and fix $z\in U^{-1}((0,\sigma])$ with a feedback    $\v(\cdot)$ of degree $h$ at $z$. To begin with we wish to choose a time $\bar\tau$ such that, for any $t\in[0,\bar\tau]$;
\begin{itemize}
\item[ i) ] $y(t)\in  B(\C,\nu)$  for any system's trajectory $y(\cdot)$ issuing from a point of $U^{-1}((0,\sigma])$;
\item[ii)]  $y_z^t(t)\in B\left(z, \frac{{\bf d}(z)}{2}\right)$ for any trajectory $y_z^t(\cdot):=y_z(\cdot,\alpha_t)$ associated to $\v(z)$   as in Lemma \ref{applemma}.
\end{itemize}
For this purpose, it is clearly sufficient to set
$$
\bar\tau(u,j):=\min\left\{\frac{\nu}{2M}, \sqrt[j]{ \frac{ {\bf d}(U^{-1}(u)) }{2M}}\right\} \quad \forall (u,j)\in(0,\sigma]\times\{1,\dots,k\},
$$
and to choose  $$\bar\tau:=\bar\tau(U(z),h).$$
Because of  ${\bf d}(U^{-1}(U(z))) \leq  {\bf d}(z)$ and  \eqref{yt},   the distance ${\bf d}([z,y_z^t(t)])$ between  the segment $[z,y_z^t(t)]$ and the target $\C$ verifies
$$
{\bf d}([z,y_z^t(t)])\geq   \frac{ {\bf d}(z) }{2}\ge\frac{ {\bf d}(U^{-1}(U(z))) }{2} ,
$$for every  $t\in[0, \bar\tau]$.
For every $u\in(0,\sigma]$,   in relation with the compact set
$$
{\mathcal K}(u):=\left\{x: \    \frac{{\bf d}(U^{-1}(u) )}{2}\le {\bf d}(x)\le\nu\right\},
$$
let   $L(u)$ and $\rho(u)$
be a Lipschitz continuity and a semiconcavity constant,
whose existence is stated in Lemma \ref{Lscv}. Let us set   $L:=L(U(z))$ and $\rho:=\rho(U(z))$.
By \eqref{yt},  for any $t\in[0,\bar\tau]$
we get
\bel{tau}
\begin{array}{c}
U(y^t_z(t))-U(z)\le  \Big\langle p(z),y^t_z(t)- z\Big\rangle +
\rho    \big|y^t_z(t)- z\big |^2   \le  \\\,\\  \displaystyle \Big\langle p(z), {\v(z)}\Big\rangle\left(\frac{t}{r}\right)^{h} +   |p(z)|ct   \left(\frac{t}{r}\right)^{h}
 +
\rho  \left(\frac{ t}{r} \right)^{2h}  (M +ct)^2\le
  \\\,\\\
\displaystyle
- \gamma(U(z))\,\left(\frac{t}{r}\right)^{h} +   |p(z)|ct   \left(\frac{t}{r}\right)^{h}
 +
\rho   \left(\frac{ t}{r} \right)^{2h}  (M +ct)^2\leq
\\\,\\
\displaystyle \left(
- \gamma(U(z))\, +    L ct +
\rho    \left(\frac{ t}{r} \right)^{h}(M+ct)^2\right)
\left(\frac{t}{r}\right)^{h}
\end{array}
\eeq
Let us observe that $\left(\frac{ t}{r} \right)^{h}\le t\le 1$ as soon as $t\le 1$.  Therefore,  if we define, for every $u$,
\bel{tauminore}
 \check\tau(u): = \frac{ \gamma(u)}{2[L(u)c+\rho(u) (M+c)^2]}  ,
\eeq
 and  we set
\bel{t2}
\tau(u,j):=\min\left\{ 1, \bar \tau(u,j), \check\tau(u)\right\} \quad \forall (u,j)\in(0,\sigma]\times\{1,\dots,k\},
\eeq
we get
$$
 L c t+\rho    \left(\frac{ t}{r} \right)^{h}(M+ct)^2\le  t[L c +\rho  (M+c)^2]\le  \frac{ \gamma(U(z))}{2}.
$$
as soon as $t\in[0,\tau]$, $\tau:=\tau(U(z),h)$.

Therefore, with this choice of $\tau$ we obtain
\bel{tau2}
U(y^t_z(t))-U(z)\le   - \frac{\gamma(U(z ))}{2}  \left(\frac{t}{r}\right)^{h} \quad \forall t\in [0,\tau].
\eeq
Moreover, $u\mapsto\tau(u,j)$ is increasing for every $j$: indeed,  by Lemma \ref{Lscv}, the   constants  $L(u)$ and $\rho(u)$  turn out to be decreasing   in $u$.
Finally, the fact that $j\mapsto \tau(u,j)^j$ and  $j\mapsto \tau(u,j)^{j-1}$  are decreasing is an easy consequence of the definition of $\tau(u,j)$ in \eqref{t2}.
The claim is now proved.
\end{proof}

 \subsection{Piecewise $C^1$ trajectories approaching the target}
 Now, let us define recursively a sequence of times $(t_j)_{j\ge0}$,
 of trajectory-control  pairs
 $(y_j(\cdot), \alpha_j(\cdot)):\left[s_{j-1},s_{j }\right]\to \R^n\times A$, $j\ge 1$, $s_0:=0$,
  $s_j:=s_{j-1} + t_i$, and points $x_j$
 as follows:
 \begin{itemize}
 \item
 $t_0:=s_0=0$,\, $x_1:=x$\,;
  \item
if $j\ge 1$, $t_j :=\tau(U(x_{j}),h_{j})$, where $h_{j}$ is the degree  of the feedback  $\v$ at $x_{j}$ and  $\tau(\cdot,\cdot)$ is as in  Claim \ref{claim2h};
  \item
  $(y_1,\alpha_1):[s_0,s_1]\to\R^n\times A$ is the trajectory-control pair
  defined as $(y_1,\alpha_1):=(y_{x_1}^{t_1}, \alpha_{t_1})$;
  \item for every  $j> 1$,
$
y_{j }(s_{j-1}):= y_{j-1}(s_{j-1}):= x_{j}\,,
$
 and the pair $(y_j(\cdot), \alpha_j(\cdot)):\left[s_{j-1},s_{j }\right]\to \R^n\times A$
 is given by $(y_j,\alpha_j):=(y_{x_j}^{t_j}, \alpha_{t_j})(s-s_{j-1})$ for every
$s\in\left[s_{j-1},s_{j }\right]$.
 \end{itemize}
Let us consider  the real sequence $$u_j:=U(x_j)\qquad j\in\nn$$
and let us show that
$$
\displaystyle \lim_{j\to\infty}u_j = 0 .
$$
 Indeed, the degree $h_j$ of the feedback  $\v$ at every $x_j$ is bounded by ${{k}}$. Moreover,  if we use $r_j$ to denote the positive integer appearing in formula \eqref{yt} in relation with   the feedback $\v$ at $x_j$, we get
 $$r_j^{h_j}\le (r(k))^{k},
 $$
  if we set
  $$
 r(k):=\max\{r_j, \,\,j\in \nn\}  \footnote{This maximum clearly exists and depends (monotonically) on $k$. For instance. $r(2)=4, r(3)=10, r(4) = \max\{22, 16\}=22$.}
.
 $$
 Therefore, by Claim  \ref{claim2h}  we obtain
\bel{diffi}
\begin{array}{l}
u_{j+1}-u_j = U(x_{j+1})-U(x_j)  \le   - \frac{\gamma(u_j)}{2}\,\,\left(\frac{\tau(u_j,h_j)}{r_j}\right)^{h_j}  \\ \, \\
\qquad\qquad\qquad\qquad\qquad\qquad\qquad  \le- \frac{\gamma(u_j)}{2}\,\,\left(\frac{\tau(u_j,{{k}})}{r(k)}\right)^{{{k}}} <0\, ,
  \end{array}
\eeq
for all $j\ge1$.
Hence
the sequence $(u_j)$ is positive  and decreasing, so there exists the limit $$\lim_{j\to\infty} u_j=\eta\ge0.$$
Let us show that  $\eta =0$. If, on the contrary, $\eta$ were
strictly positive, by Claim  \ref{claim2h} one would have $\lim_{j\to\infty} \tau(u_j,{{k}})\ge\tau(\eta,{{k}})>0$.
Hence taking the limit in \eqref{diffi}
one would obtain
$$
0=\eta-\eta\le- \lim_{j\to\infty} \frac{\gamma(u_j)\tau^{{k}}(u_j,{{k}})}{2(r(k))^{{k}}} \le  -\frac{\gamma(\eta)\tau^{{k}}(\eta,{{k}})}{2(r(k))^{{k}}} < 0,
 $$
 a contradiction.
Therefore
$$\lim_{j\to\infty} U(x_j)=\lim_{j\to\infty} u_j = 0.
$$
Hence,  setting  $$S:=\lim_{j\to\infty} s_j\,\,\, =\displaystyle\sum_{i=1}^\infty t_i$$ and
$$
(y,\alpha)(s) := (y_j,\alpha_j)(s)  \quad \forall j\ge1, \,\,\forall s\in [s_{j-1},s_j] ,
$$
one finds that
$$
\lim_{j\to\infty} {\bf d}\left(y(s_j)\right) = 0.
$$
Actually the stronger limit relation
$$
\lim_{s\to S^-} {\bf d}\left(y(s)\right) = 0
$$
holds, as it  follows  from the construction of the  function $\beta$ below.

\subsection{Construction of a bounding  ${\mathcal KL}$ function }
In order to conclude the proof that the system is GAC to $\C$,
we have to establish the existence of  a ${\mathcal KL}$ function $\beta$ such that ${\bf d}(y(s))\le\beta({\bf d}(y(0)),s)$ for every $s\ge0$,
as in   Definition \ref{GAC}.

 By   Claim \ref{claim2h}, for any $t_j=\tau(u_j,h_j)$ one has $t_j^{h_j-1}\ge \tau^{{{k}}-1}(u_j,{{k}})$. Moreover, as already remarked,  $(r_j)^{h_j}\le (r(k))^{k}$ (recall that we are using $r_j$ to denote the positive integer appearing in formula \eqref{yt} in relation with    the feedback $\v$ at $x_j$) .  Hence, for any $j\ge1$,   we have:
$$
\begin{array}{l}
U(y_j(s_j))-U(y_{j}(s_{j-1}))= u_{j+1}-u_{j}    \\ \, \\
\qquad\qquad\qquad\qquad\qquad\le-\frac{\gamma(u_j)}{2}\,\,\left(\frac{t_j}{r_j}\right)^{h_j} \le   -\frac{\gamma(u_j) \tau^{{{k}}-1}(u_j,{{k}})}{2(r({{k}}))^{{{k}}}}\,t_j.
\end{array}
$$
Let us define  the function $\tilde\gamma:(0,\sigma]\to\R$ by setting
\bel{tildeg}
 \tilde\gamma(u):=  \frac{\gamma(u) \,\tau^{{{k}}-1}(u,{{k}})}{2(r({{k}}))^{{{k}}}}.
\eeq
Clearly,  by the monotonicity of $u\mapsto \tau(u,{{k}})$,  $\tilde\gamma$ is (positive and) strictly increasing.
Therefore, since  $U(y(s_j))\le U(y(s_i))$  for every $i=1,\dots,j$,   we get
$$
\begin{array}{l}
U(y(s_j))-U(z)=[U(y(s_j))-U(y(s_{j-1}))]+[U(y(s_{j-1}))-U(y(s_{j-2}))] +\dots  \\ \, \\
 +[U(y(s_{1}))-U(y(0))]\le-\sum_{i=1}^j\tilde\gamma(U(y(s_i)))\,[s_i-s_{i-1}]\le  -\tilde\gamma(U(y(s_j)))\,s_j\,.
 \end{array}
 $$
In particular,  we have
\bel{EP}
U(y(s_j))+\tilde\gamma(U(y(s_j)))\,s_j\le U(z)\,.
\eeq
 We now replace the function $\tilde\gamma$ with    the slightly different function $\hat \gamma:[0,+\infty)\to[0,+\infty)$  defined  by $\hat\gamma(u)\doteq \min\{u,\tilde\gamma(u)\}$   for all $u\in[0,+\infty)$. Notice that $\hat\gamma$   is  continuous,  strictly increasing  and $\hat\gamma(u)>0$ \, $\forall u>0$,  $\hat\gamma(0)=0$. Then,  for any $j\ge 1$,
$$
\hat \gamma(U(y(s_j)))(1+s_j)\le U(z),
$$
so that
\bel{stimau}
 U(y(s_j))\le \hat \gamma^{-1}\left(\frac{U(z)}{1+s_j}\right).
\eeq
 Let $\delta_-$, $\delta_+:[0,+\infty)\to[0,+\infty)$ be the  continuous, strictly increasing, unbounded  functions defined by
\bel{sigma}
\delta_-(u)\doteq\min\{{\bf d}(x): \ U(x)\ge u\}, \quad
 \delta_+(u)\doteq\max\{{\bf d}(x): \ U(x)\le u\}
\eeq
and let us set $\hat\delta_-(u):=\min\{\delta_-(u), u\}$.  Notice that    $\hat\delta_-(0)=\delta^+(0)=0$,  and
$$
\quad\hat\delta_-(U(x))\le {\bf d}(x)\le\delta^+(U(x)),
$$
$\forall x\in{U^{-1}((0,\sigma])}$.
Therefore, setting
\bel{defbeta}
\hat\beta(\delta,s):=\delta_+\circ \hat \gamma^{-1}\left(
\frac{\hat\delta_-^{-1}(\delta)}{1+s} \right) \quad\forall (\delta,s)\in [0,+\infty)\times[0,+\infty) ,
\eeq
by \eqref{stimau} we get
\bel{stimau2}
{\bf d}(y(s_j ))\le \delta_+\left( U(y(s_j )\right)\leq  \delta_+\left(\hat \gamma^{-1}\left(\frac{U(z)}{1+s_j}\right) \right) \le    \hat\beta({\bf d}(z), s_j ),
\eeq
for every  $j\ge 1$. The estimate \eqref{stimau2} says that the function $\hat\beta$ bounds the distance of the trajectory $y(\cdot)$ from the target $\C$ at the discrete times $s_j$.  Hence, in order to  get a bound at
 all times, we need to slightly modify $\hat\beta$. For this purpose, given  any $x\in\R^n\setminus\C$, let us select    a point $\pi(x)\in\C$ such that ${\bf d}(x)=|x-\pi(x)|$.   Notice that for any $s\in[s_j,s_{j+1}]$, one has
 $$
 \begin{array}{l}
 {\bf d}(y(s))\le |y(s)-\pi( y(s_j))|\le |y(s)-y(s_j)|+| y(s_j)-\pi( y(s_j))| \\ \, \\
 \qquad\qquad\qquad\le M [s_{j+1}-s_j]+{\bf d}( y(s_j))  .
 \end{array}
 $$
  Furthermore, by the definition of $\tau$ (see Claim \ref{claim2h}) it follows that
\bel{ed}
 s_{j+1}-s_j=t_{j+1}\le  \tau(u_{j+1},{{k}})\le    \tau(\delta_-^{-1}({\bf d}(y(s_j))),{{k}}).
\eeq
 Therefore,
$$
\begin{array}{l}
 {\bf d}(y(s))\le   M \tau(\delta_-^{-1}({\bf d}(y(s_j))),{{k}})+{\bf d}(y(s_j)) \\ \, \\
\qquad\qquad\qquad \le M \tau(\delta_-^{-1}(\hat\beta({\bf d}(z),s_j)),{{k}})+ \hat\beta({\bf d}(z), s_j )\,.
 \end{array}
$$
Since
for all $\delta$ the function $s\mapsto \hat\beta(\delta,s) $ is decreasing,
one obtains
$$
{\bf d}(y(s ))\le \beta({\bf d}(z), s ) \qquad \forall s\in[0,+\infty[,
$$
where   we have set, for all $s\in\left[0, \tau(\delta_-^{-1}(\delta),{{k}})\right]$,
$$
\beta(\delta,s):= M \tau(\delta_-^{-1}(\hat\beta(\delta,0)),{{k}})+ \hat\beta(\delta, 0 )
$$
and, if $s>\tau(\delta_-^{-1}(\delta),{{k}})$,
$$
\beta(\delta,s):= M \tau(\delta_-^{-1}(\hat\beta(\delta,s-\tau(\delta_-^{-1}(\delta),{{k}}))),{{k}})+ \hat\beta(\delta, s-\tau(\delta_-^{-1}(\delta),{{k}}) ).
$$
 \subsection{Removal of the fictitious $C^{k-1}$-bound}
 Let us see now that, by means of a cut-off argument, we can  remove the auxiliary boundedness hypothesis \eqref{fieldbounded}.
Let   $\psi:\R^n\to[0,1]$ be a $C^{\infty}$ map such that
\bel{psi}
 \psi = 1 \quad\hbox{on}\quad \ol{B(\C,\nu)\setminus \C}, \qquad \psi = 0
 \quad\hbox{on}\,\,\,  \rr^n\backslash B(\C,2\nu)
\eeq
and consider the control system
\bel{cutode}
\xi'=\sum_{i=1}^ma_i\,\left(\psi f_i(\xi)\right)\,
\eeq
 Notice that the functions $(\psi\,f_i)$ belong to $C_b^{k-1}(\R^n\setminus\C)$  because of the cut-off factor $\psi$.
 Moreover,  any trajectory $\xi(\cdot)$ of \eqref{cutode} with the initial condition $z\in  U^{-1}((0,\sigma])$,   exists globally and cannot exit the  compact  set $\ol{B(\C,2\nu)\setminus\C}$.  Owing to  the previous step,  there exists a trajectory $\xi$ which  approaches asymptotically the target and verifies ${\bf d}(\xi(s ))\le \beta({\bf d}(z), s ) \qquad \forall s\in[0,+\infty[$. Moreover,  $\xi(s)$ belongs to $B(\C,\nu)$ for every $s\ge0$. Therefore   $\xi$ is a solution of the original system, proving that  \eqref{odeL} has the GAC property in $U^{-1}((0,\sigma])$.

 By the arbitrariness of $\sigma>0$, it is easy to extend these constructions  from $U^{-1}((0,\sigma])$   to the whole set $\R^n\setminus\C$.
   This concludes the proof of \eqref{Th3.1gen}. \qed

%%%%%%%%%%%%%%%%%%%%%%%%%%%%%%%%%%%%%%%%%%%%%%%%%%%%%%%%%%%%%%%%

\section{The case of  nonsmooth dynamics}\label{nonsmooth}

 Let us begin with an example, where the vector fields are not $C^1$.

\begin{Example}\label{esL}  {\rm Let us consider the system
\bel{nhmod}
\dot y =a_1\,f_1(y)  +a_2\,  f_2(y)\,,
\eeq
where
$$
f_1:= \frac{\partial}{\partial x_1}
+ (|x_2|-2x_2)\frac{\partial}{\partial x_3}\,, \quad f_2:= \frac{\partial}{\partial x_2}+(|x_1|+2x_1)\frac{\partial}{\partial x_3},
$$
and let the target $\C$ coincide with the origin.  For the same reason as in the nonholonomic integrator, a smooth degree-$1$ \cf\, does not exist (see Example \ref{es1}).  On the other hand, the given  notion of degree-$2$  control Lyapunov function is not even  meaningful here,
in that  the classical bracket $[f_1,f_2]$ is not   defined at points $x$ such that $x_1=0$ or $x_2=0$.  Yet, in the open, dense set $\{x: \ x_1\ne 0, \ x_2\ne0\}$  the bracket is well defined and, furthermore,   the Lie algebra rank condition is verified. So it is reasonable to look for a Lyapunov-like condition involving {\it somehow } Lie brackets.
}
\end{Example}

 One one hand,   in  the definition of degree-$k$ \cf \, one requires the vector fields $f_1,\ldots,f_m$ to be  of class $C^{k-1}$ (see also Remark \ref{wr}), for this   guarantees  that  the Lie brackets up to the degree $k$ are well defined and continuous.
 On the other hand, as remarked in the Introduction, the non smoothness of  a degree-$1$ \cf\,   is more related to a shortage of  dynamics' directions than to the regularity of the involved vector fields.

Let us introduce the notion of set-valued bracket for locally Lipschitz vector fields.
\begin{Definition}[\cite{RS1}]\label{Lb} Let $\Omega\subset\R^n$ be an open set and let $f$, $g$ be   vector fields belonging to $C^{0,1}(\Omega)$. For every $x\in\Omega$, let us set
$$\begin{array}{c}
 [f,g]_{set}(x):= \\\,\\co\Big\{v\in\rr^n , \quad  v=\lim_{x_n\to x}  [f,g](x_n) \quad (x_n)_{n\in\nn}\subset DIFF(f)\cap  DIFF(g)\Big\},
\end{array}
$$
where $co$ means "convex hull", and $DIFF(f)$, $DIFF(g)$ denote  the   subsets of differentiability points of $f$ and $g$, respectively. Let us observe that $DIFF(f)$ and $DIFF(g)$ have full measure, hence they are dense in $\Omega$.
\end{Definition}
Notice that, as in the regular case, one has $[f,f]_{set}(x)=\{0\}$ and  \linebreak
$[f,g]_{set}(x)=- [g,f]_{set}(x)$ for every $x\in\Omega$ (for any $E\subset\R^n$ we use the notation $-E=\{-v: \ v\in E\}$).

\vv
Let us consider the control system
 \bel{odeLlip}
\left\{ \begin{array}{l}
\ds\dot y  =\sum_{i=1,\ldots,m} a_i\,f_i(y)\\ \, \\
y(0)=x\in \R^n\setminus\C
\end{array}\right.
\eeq
and let us assume that  $f_1,\dots,f_m$ belong to $C_b^{0,1}(\Omega\setminus\C)$  for any bounded, open set $\Omega\subset\R^n$ (see Subsection \ref{prel}).

\vv
The families  $\F^{(1)}$ and   $\F^{(2)}$ are formally defined   as in the regular (i.e., $C^1$) case, except that  their elements  are set-valued vector fields \footnote{$\F^{(1)}$ practically coincides with the one in the regular case, for its elements are set-valued maps which are singletons, indeed.}:
$$
\F^{(1)}:=  \Big\{\{f_\ell(\cdot)\}\, , \, \{-f_\ell(\cdot)\} : \quad \ell=1,\dots,m\Big\},
$$
and
$$
\F^{(2)}:= \F^{(1)}\cup  \Big\{ [f_i,f_j]_{set}(\cdot) : \quad i,j=1,\dots,m\Big\}.
 $$
 As in the regular case,  for every $x\in\R^n\setminus\C$, we set
$$
\F^{(1)}(x):=  \Big\{\{f_\ell(x)\}\, , \, \{-f_\ell(x)\} : \quad \ell=1,\dots,m\Big\},
$$
and
$$
\F^{(2)}(x):= \F^{(1)}(x)\cup  \Big\{ [f_i,f_j]_{set}(x) : \quad i,j=1,\dots,m\Big\}.
 $$
 Accordingly, for  $h=1,2$, we define the {\it degree-$h$ Hamiltonian}  $H^{({{h}})}$ by  setting
$$
H^{({{h}})}(x, p):= \inf_{v\in {\F^{({{h}})}(x)}}\,\,\sup_{w\in v}\Big\langle p ,\, w\Big\rangle,
$$
 for all $(x,p)\in (\R^n\setminus\C)\times \rr^n$.
More explicitly, one has
$$
 H^{(1)}(x,p)  =\displaystyle  \inf\left\{-\left|\big\langle p ,\, f_\ell(x)\big\rangle\right|: \quad \ell=1,\dots,m\right\},
 $$
and
$$
H^{({{2}})}(x, p)=\inf_{\ell, i, j}\left\{-\left|\big\langle p ,\, f_\ell(x)\big\rangle\right|\, , \, \sup_{w\in  [f_i,f_j]_{set}(x)}\big\langle p ,\, w \big\rangle : \quad \ell, i,j=1,\dots,m\right\}.
$$

 We can now state, for the case $k=2$,  a  generalization of  Theorem  \ref{Th3.1gen}  to the nonsmooth case:
\begin{Theorem}\label{Th3.1genL}
Let us assume that   a degree-$2$ control Lyapunov function  exists. Then   system {\rm (\ref{odeLlip})}  is {\rm GAC} to $\C$.
\end{Theorem}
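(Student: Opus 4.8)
The plan is to retrace the proof of Theorem \ref{Th3.1gen} in the case $k=2$, isolating the single place where the smoothness of the $f_i$ was genuinely used — namely the asymptotic flow estimate \eqref{yt} of Lemma \ref{applemma} — and to replace it by its set-valued counterpart from \cite{RS1}. First I would check that all the ``soft'' ingredients survive the weakening of regularity. By Definition \ref{Lb} the map $x\mapsto[f_i,f_j]_{set}(x)$ is upper semicontinuous with nonempty compact convex values; consequently the min-max Hamiltonian $H^{(2)}$ is still upper semicontinuous, so Proposition \ref{claim1} (whose justification uses only upper semicontinuity of $H^{(2)}$ and of $x\mapsto D^*U(x)$) continues to hold, while Lemma \ref{Lscv} is a statement about $U$ alone. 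Thus, for a fixed $\sigma>0$, I obtain a strictly increasing $\gamma$, a selection $p(\cdot)$ of $D^*U(\cdot)$, and a degree-$2$ feedback exactly as in the smooth case. The essential remark about the min-max structure is the following: if $H^{(2)}(x,p(x))\le-\gamma(U(x))$ is attained on a bracket $[f_i,f_j]_{set}(x)$, then $\sup_{w\in[f_i,f_j]_{set}(x)}\langle p(x),w\rangle\le-\gamma(U(x))$, so that \emph{every} $w\in[f_i,f_j]_{set}(x)$ satisfies $\langle p(x),w\rangle\le-\gamma(U(x))$; this robustness is precisely what is needed, since the commutator flow below realizes an a priori unknown element of the set-valued bracket.

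The new analytic input is a nonsmooth analogue of Lemma \ref{applemma}. For Lipschitz $f_i,f_j$ I would use the same commutator control $\alpha_t$ as in the smooth case, but the conclusion is weaker: by the results of \cite{RS1}, every limit point of $(y_z(t,\alpha_t)-z)/(t/r)^2$ as $t\to0^+$ belongs to the convexified bracket $[f_i,f_j]_{set}(z)$, and the remainder in the expansion is only $o(t^2)$ — rather than the $O(t^3)$ of \eqref{yt} — because only first-order (Lipschitz) regularity is available. Combined with the sup-estimate above, this yields $\langle p(z),\,y_z(t,\alpha_t)-z\rangle\le-\tfrac{\gamma(U(z))}{2}(t/r)^2$ for small $t$. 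I would then re-derive Claim \ref{claim2h}: in the chain of inequalities culminating in \eqref{tau}, the term $|p(z)|\,c\,t\,(t/r)^{h}$ coming from the $O(t^{h+1})$ remainder is replaced by $|p(z)|\cdot o(t^2)=o(1)\cdot(t/r)^2$, which is of strictly lower order than the leading $-\gamma(U(z))(t/r)^2$. Hence, for $t\in[0,\tau(U(z),h)]$ with an appropriately chosen $\tau$ still enjoying the monotonicity properties i)--ii) of Claim \ref{claim2h}, the decrease $U(y_z(t,\alpha_t))-U(z)\le-\tfrac{\gamma(U(z))}{2}(t/r)^{h}$ holds as before (the degree-$1$ case being the elementary one, since for continuous $f_\ell$ the flow satisfies $y_z(t)=z+t\,f_\ell(z)+o(t)$).

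Once Claim \ref{claim2h} is available in this form, the remainder of the argument transfers verbatim from the smooth $k=2$ case, since the recursive construction of the piecewise trajectory, the proof that $u_j=U(x_j)\to0$, the construction of the bounding $\mathcal{KL}$ function $\beta$, and the cut-off removal of the fictitious $C^{0,1}$-bound \eqref{fieldbounded} all rely solely on the conclusion of Claim \ref{claim2h} and on properties of $U$, never on differentiability of the $f_i$. The main obstacle is the \emph{uniform} validity of the set-valued asymptotic estimate: for $\tau(u,\cdot)$ to be well defined with the required monotonicity, the $o(t^2)$ remainder and the inclusion of limit points in $[f_i,f_j]_{set}(z)$ must hold uniformly as $z$ ranges over the compact sets $\mathcal{K}(u)$ appearing in Claim \ref{claim2h}. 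Establishing this uniform commutator estimate for Lipschitz vector fields — together with the verification that the realized endpoint approximates an element of the convexified set-valued bracket rather than of the pointwise (possibly undefined) classical bracket — is the technical heart of the proof, and is exactly the reason why the Hamiltonian $H^{(2)}$ had to be formulated as a min-\emph{max} rather than a plain infimum.
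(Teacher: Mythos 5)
Your overall route is exactly the paper's: adapt the soft ingredients (upper semicontinuity of the min--max Hamiltonian $H^{(2)}$, hence the analogue of Proposition \ref{claim1}, which is the paper's Proposition \ref{claim1Lip}; the set-valued notion of degree-$2$ feedback, Definition \ref{DfedLip}; Lemma \ref{Lscv} untouched), replace the asymptotic formula \eqref{yt} by a set-valued commutator estimate, rerun Claim \ref{claim2h}, and then observe that the recursive construction, the $\mathcal{KL}$ bound and the cut-off argument go through verbatim. Your observation that the min--max structure forces the decrease for \emph{every} element of $[f_i,f_j]_{set}(x)$ is precisely the point the paper makes after Claim \ref{claim2hL}.

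The one genuine gap is in the analytic input you postulate. You assume that for Lipschitz fields the commutator control only yields a pointwise $o(t^2)$ remainder, and you correctly recognize that such an estimate, being non-uniform in $z$, is not enough: the threshold $\check\tau$ in \eqref{tauminore} is built from the constant $c$ of the remainder, and the whole convergence argument ($u_j\to0$, monotonicity properties i)--ii) of $\tau$, the construction of $\beta$) requires $\tau$ to depend on $z$ only through $U(z)$. But you then leave this ``technical heart'' unresolved, effectively proposing to prove a uniform commutator estimate from scratch. This is not necessary, and the weaker $o(t^2)$ form you start from is not what is available: by the results of \cite{RS2} and \cite{FR2} (the paper's Lemma \ref{applemmaL}), for $f_i,f_j\in C_b^{0,1}$ one has the \emph{uniform third-order} estimate
\begin{equation*}
d\!\left( y_x(t,\alpha_t)-x \, , \, [f_i,f_j]_{set}(x)\,\frac{t^2}{16}\right)\le \frac{c}{16}\,t^3 ,
\end{equation*}
with $c$ depending only on the global Lipschitz and sup bounds of the fields, i.e.\ the exact analogue of \eqref{yt} with the single vector $\v(x)(t/r)^h$ replaced by the compact convex set $[f_i,f_j]_{set}(x)\,t^2/16$. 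With this lemma quoted, one picks for each $t$ a $w(t)\in[f_i,f_j]_{set}(x)$ realizing the distance, uses $\langle p(z),w(t)\rangle\le-\gamma(U(z))$ (your sup-remark), and the chain of inequalities \eqref{tau} and the definition of $\tau$ survive with no change of order; the rest of your argument then is the paper's proof. So your proposal is structurally correct, but the missing uniform estimate should be filled by citation of the known nonsmooth commutator formula rather than treated as an open problem, and your weakened $o(t^2)$ substitute would not suffice to run the quantitative construction as written.
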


\addtocounter{Example}{-1}
\begin{Example}[continued]{\rm   Let us come  back to the system in  \eqref{nhmod} and compute the bracket $[f_1,f_2]_{set}$. It turns out that
$$
[f_1,f_2]_{set}(x)=I(x)\frac{\partial}{\partial x_3},
$$
where
$$
I(x):=\left\{\begin{array}{l}
 \{4\} \qquad\text{if $x_1x_2>0$}, \\
\{2\}   \qquad\text{if $x_1<0$ and $x_2>0$}, \\
\{6\}  \qquad\text{if $x_1>0$ and $x_2<0$}, \\
\left[2,4\right]  \quad\text{if either $x_1=0$ and $x_2>0$, or  $x_1<0$ and $x_2=0$}, \\
\left[4,6\right] \quad\text{if either $x_1=0$ and $x_2<0$, or  $x_1>0$ and $x_2=0$}, \\
\left[2,6\right] \quad\text{if $x_1=x_2=0$.}
\end{array}\right.
$$
The distance function $U(x)=|x|$ is not a degree-$1$  \cf. Indeed, by
$$
H^{(1)}(x,p)=\inf\Big\{-|p_1+p_3(|x_2|-2x_2)|\, , \, -|p_2+p_3(|x_1|+2x_1)|\Big\}
$$
one obtains
$$
H^{(1)}\big((0,0,x_3),DU(0,0,x_3)\big)=0 \quad \text{for every $x_3\ne0$.}
$$
Yet, the distance function   $U$ happens to be  a ($C^\infty$)  degree-$2$  \cf.  Indeed,
$$
\begin{array}{l}
H^{(2)}(x,p)=
\inf\Big\{H^{(1)}(x,p) \, , \, \sup_{w\in  I(x)}\,w\,p_3\, , \, \sup_{w\in - I(x)}\,w\,p_3\Big\}= \\ \, \\
-\sup\Big\{|p_1+p_3(|x_2|-2x_2)|\, , \, |p_2+p_3(|x_1|+2x_1)|\, , \, 2|p_3|\Big\}
\end{array}
$$
and, since  $|DU(x)|=1$ for every $x\ne0$, one gets
\bel{ff}
H^{(2)}\big(x,DU(x)\big)<0.
\eeq
Notice that, for the validity of the strict inequality in \eqref{ff}, it is crucial that $0\notin [f_1,f_2]_{set}(x)$ for every $x\ne0$. Furthermore, arguing as in Remark \ref{InvL}, we know that a (possibly nonsmooth)  degree-$1$ \cf \, does exist. Actually, the function
$$
U(x)=\max \Big\{\sqrt{x_1^2+x_2^2}, |x_3|-\sqrt{x_1^2+x_2^2}\Big\}
$$
introduced in \cite{Ri} as a control Lyapunov function for the nonholonomic integrator is a degree-$1$ \cf \,also for this system.
}
\end{Example}

\subsection{Proof of Theorem    \ref{Th3.1genL}}
The proof is akin to the proof of  Theorem  \ref{Th3.1gen}. Yet,  because of the new kind of brackets and Hamiltonians here involved, some changes are needed.

 As in the regular case,  the $0$ in   the dissipative  relation   can be replaced by a nonnegative function of $U$:
\begin{Proposition}\label{claim1Lip}  Let  $U:\overline{\R^n\setminus\C} \to\R$   be a continuous function, such that $U$ is   locally  semiconcave, positive definite and proper on  ${\R^n\setminus\C}$.  Then the conditions (i) and (ii) below are equivalent:

\begin{itemize}\item[(i)]  $U$ verifies
$$
H^{({{2}})}(x, D^*U(x))<0 \quad\text{for all $x\in\R^n\setminus\C$;}
$$
\item[(ii)] for every $\sigma>0$  there exists a continuous, strictly increasing, function  $\gamma:[0,+\infty)\to:[0,+\infty)$ such that
$$
H^{({{2}})}(x,D^*U(x)) \le-\gamma(U(x)) \quad\text{for all $x\in U^{-1}((0,2\sigma])$.}
$$
\end{itemize}
\end{Proposition}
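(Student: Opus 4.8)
The plan is to mimic the proof of the smooth case, \cite[Proposition 3.1]{MR} (recalled right after Proposition \ref{claim1}), the only genuinely new ingredient being the upper semicontinuity of the set-valued Hamiltonian $H^{(2)}$. The implication (ii)$\Rightarrow$(i) is immediate: since $U$ is positive definite and $\gamma$ is strictly increasing, $\gamma(U(x))>\gamma(0)\ge0$ for every $x\in\R^n\setminus\C$, whence $H^{(2)}(x,D^*U(x))\le-\gamma(U(x))<0$. So I would concentrate on (i)$\Rightarrow$(ii), after collecting the relevant regularity of the data: the map $x\mapsto D^*U(x)$ is upper semicontinuous with nonempty compact values (see the remark after Definition \ref{D*}) and locally bounded (Lemma \ref{Lscv}), while, by \cite{RS1}, each generalized bracket $x\mapsto[f_i,f_j]_{set}(x)$ is upper semicontinuous with nonempty compact convex values and locally bounded (the $f_i$ being in $C_b^{0,1}$, the classical brackets are locally bounded on their domain of definition).

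The core step is to show that $H^{(2)}$ is upper semicontinuous jointly in $(x,p)$. The terms $-|\langle p,f_\ell(x)\rangle|$ are continuous, so it suffices to treat, for fixed $i,j$, the support-type function $(x,p)\mapsto\sup_{w\in[f_i,f_j]_{set}(x)}\langle p,w\rangle$. Given $(x_n,p_n)\to(x,p)$, pick $w_n\in[f_i,f_j]_{set}(x_n)$ realizing the (attained, by compactness) supremum; local boundedness gives a subsequence $w_n\to w$, the closed graph of the upper semicontinuous map $[f_i,f_j]_{set}$ yields $w\in[f_i,f_j]_{set}(x)$, and therefore $\limsup_n\langle p_n,w_n\rangle=\langle p,w\rangle\le\sup_{w'\in[f_i,f_j]_{set}(x)}\langle p,w'\rangle$. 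Being the minimum of finitely many upper semicontinuous functions, $H^{(2)}$ is upper semicontinuous; in particular $H^{(2)}(x,\cdot)$ is so for each fixed $x$.

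I would then introduce the marginal function $\Phi(x):=\max\{H^{(2)}(x,p):p\in D^*U(x)\}$, well defined because $D^*U(x)$ is compact and $H^{(2)}(x,\cdot)$ is upper semicontinuous; condition (i) reads exactly $\Phi(x)<0$ on $\R^n\setminus\C$. Combining the upper semicontinuity and local boundedness of the compact-valued map $D^*U$ with the joint upper semicontinuity of $H^{(2)}$, a routine selection-and-extraction argument shows that $\Phi$ is itself upper semicontinuous: for $x_n\to x$ choose $p_n\in D^*U(x_n)$ attaining $\Phi(x_n)$, pass to a convergent subsequence $p_n\to p\in D^*U(x)$, and estimate $\limsup_n\Phi(x_n)=\limsup_nH^{(2)}(x_n,p_n)\le H^{(2)}(x,p)\le\Phi(x)$.

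To produce $\gamma$, fix $\sigma>0$. For each $u\in(0,2\sigma]$ the set $U^{-1}([u,2\sigma])$ is compact ($U$ being proper and positive definite), so $\omega(u):=-\max\{\Phi(x):x\in U^{-1}([u,2\sigma])\}$ is well defined, strictly positive and nondecreasing in $u$. For every $x$ with $U(x)=u$ and every $p\in D^*U(x)$ one has $x\in U^{-1}([u,2\sigma])$, hence $H^{(2)}(x,p)\le\Phi(x)\le-\omega(U(x))$; it then suffices to pick a continuous, strictly increasing $\gamma:[0,+\infty)\to[0,+\infty)$ with $\gamma(0)=0$ and $0<\gamma(u)\le\omega(u)$ on $(0,2\sigma]$, which exists by a standard minorization of the positive nondecreasing $\omega$, precisely as in \cite[Proposition 3.1]{MR}. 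The main obstacle is the second step: securing the upper semicontinuity of the min-max Hamiltonian in the Lipschitz setting, where the bracket is a genuinely set-valued object defined through limits at differentiability points; everything there hinges on the structural properties of $[f_i,f_j]_{set}$ established in \cite{RS1}.
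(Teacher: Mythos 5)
Your proposal is correct and follows essentially the same route as the paper: the paper likewise reduces everything to the joint upper semicontinuity of $H^{(2)}$, obtained from the upper semicontinuity (with compact convex values) of $x\mapsto[f_i,f_j]_{set}(x)$ established in \cite{RS1} together with standard results on marginal/support functions (cited from \cite{AC}), and then concludes exactly as in Proposition \ref{claim1}, i.e.\ as in \cite[Proposition 3.1]{MR}. The only difference is that you spell out the selection-and-extraction arguments and the minorization producing $\gamma$, which the paper delegates to those references.
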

\begin{proof} By \cite{RS1},  for every $i,j$, the set-valued map $x\mapsto [f_i,g_j]_{set}(x)$ is upper semicontinuous, with compact, convex values. Moreover,   basic results on marginal functions imply that  $(x,p)\mapsto \sup_{w\in  [f_i,g_j]_{set}(x)}\big\langle p ,\, w \big\rangle$  is  upper semicontinuous  (see e.g. \cite{AC}). As an easy consequence,  the Hamiltonian
$$
(x,p)\mapsto H^{(2)}(x, p)
$$
 turns out to be  upper semicontinuous. At this point one can conclude,  arguing  exactly as in Proposition \ref{claim1}.
\end{proof}

We now need to adapt the notion of {   degree-$2$  feedback} to  the case when Lie brackets are set-valued.    For a given $\sigma>0$, let  ${{\gamma}}$  be a  function as in
Proposition \ref{claim1Lip}, and let $x\mapsto p(x)$ be a selection of  $x\mapsto D^*U(x)$ on $U^{-1}((0,2\sigma])$, so that
$$
H^{({{2}})}(x,p(x)) \le-\gamma(U(x)) \qquad \forall x\in U^{-1}((0,2\sigma]).
$$

 \begin{Definition}\label{DfedLip} For a given  $\sigma>0$, let  ${{\gamma}}(\cdot)$, and $p(\cdot)$ be chosen as above.
A selection  $$ \v :  U^{-1}((0,2\sigma]) \to 2^{\rr^n} , \qquad
 x  \mapsto \v(x)\in \F^{({{2}})}(x)$$  is called  a {\em  degree-$2$ feedback}   (corresponding to $U$, $\sigma$, ${{\gamma}}$, and $p(\cdot)$)  if for every $x\in U^{-1}((0,2\sigma])$  there exists  $h\in\{1,2\}$ such that
\bel{feed1lip}
\left\{\begin{array}{l}
\v(x)\in\F^{(h)}(x), \\\,\\
\ds \sup_{w\in \v(x)}\Big\langle p(x) ,\, w  \Big\rangle \le -{{\gamma}}(U(x)),   \\ \,\\
 \text{and, if $h=2$,} \quad H^{({{1}})}(x,p(x))>-\gamma(U(x)).
\end{array}\right.
\eeq
The number  $h$ will be called the {\em degree of the feedback  $\v$ at  $x$}.
\end{Definition}
More explicitly, when $h=1$,  for some $\ell=1,\dots,m$  one has
$$
\left\{\begin{array}{l}
\v(x)=\{f_\ell(x)\} \ \text{or} \   \v(x)=-\{f_\ell(x)\} \\\,\\
 -\left|\Big\langle p(x) ,\, f_\ell(x)  \Big\rangle\right| \le -{{\gamma}}(U(x));
 \end{array}\right.
$$
if $h=2$,  for some $i,j=1,\dots,m$ ($i\ne j$)  one has
$$
\left\{\begin{array}{l}
\v(x) =[f_i,f_j]_{set}(x), \\\,\\
 \Big\langle p(x) ,\, w  \Big\rangle \le -{{\gamma}}(U(x)) \quad \forall w\in [f_i,f_j]_{set}(x),   \\ \,\\
 \text{and} \quad H^{({{1}})}(x,p(x))>-\gamma(U(x)).
\end{array}\right.
$$

\vskip0.3truecm

The following  claim is  a version  of  Claim  \ref{claim2h}  adapted to the set-valued notion of feedback:
 \begin{Claim}\label{claim2hL}  Let $U(\cdot)$,  $\sigma$, $ \gamma(\cdot)$ and  $p(\cdot)$ as above. Furthermore let $\v(\cdot)$ be a (set-valued) degree-$2$ feedback corresponding to these data. Then there exists a time-valued  function
$$
\tau:(0,\sigma]\times \{1,2\}\to (0,1]\,,
$$
 such that
\begin{itemize}
\item[i)] $j\mapsto \tau(u,j)^j$ and  $j\mapsto\tau(u,j)^{j-1}$ are decreasing  for every $u\in(0,\sigma]$,
\item[ii)]  $u\mapsto \tau(u,j)$ is increasing  for every $j\in\{1,2\}$,  and
\item[iii)] for all  $z\in U^{-1}((0,\sigma])$ with a feedback    $\v(\cdot)$ of degree $h$ at $z$,  one has
 $$
U(y_z(t,\alpha_t))-U(z)\le  - \frac{\gamma(U(z ))}{2}  \left(\frac{t}{r}\right)^{h}\,\quad \forall t\in\left[0,\,\tau(U(z),h)\right]\,,
$$
where $r$ and   $y_z(\cdot,\alpha_t)$  are an integer and a  trajectory  associated to  $\v(z)$ according to Lemma \ref{applemmaL}.
\end{itemize}
\end{Claim}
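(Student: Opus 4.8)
The plan is to mimic the proof of Claim \ref{claim2h} line by line, the single structural change being that the smooth flow-approximation Lemma \ref{applemma} is replaced by its set-valued counterpart (Lemma \ref{applemmaL}), and that the scalar inner-product inequality used there is now extracted from the \emph{uniform} bound over the whole bracket set imposed by Definition \ref{DfedLip}. First I would fix $\nu>0$ with $U^{-1}((0,2\sigma])\subset B(\C,\nu/2)$ and, exactly as before, set
$$
\bar\tau(u,j):=\min\left\{\frac{\nu}{2M},\ \sqrt[j]{\frac{{\bf d}(U^{-1}(u))}{2M}}\right\},
$$
so that, for $t\le\bar\tau(U(z),h)$, the curve $y_z^t(\cdot)$ stays in $B(\C,\nu)$ and inside $B(z,{\bf d}(z)/2)$; in particular the segment $[z,y_z^t(t)]$ keeps distance at least ${\bf d}(U^{-1}(U(z)))/2$ from the target, so that the Lipschitz and semiconcavity constants $L(U(z)),\rho(U(z))$ supplied by Lemma \ref{Lscv} apply along it.

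The only genuinely new point is how the feedback inequality enters the estimate. When the degree of $\v$ at $z$ equals $1$, the value $\v(z)=\pm\{f_\ell(z)\}$ is a singleton and everything is literally as in Claim \ref{claim2h}. When the degree is $2$, Lemma \ref{applemmaL} provides, for each $t>0$, a control $\alpha_t$, an integer $r$ and a vector $v_t\in[f_i,f_j]_{set}(z)$ (possibly depending on $t$) with
$$
\left|y_z(t,\alpha_t)-z-\frac{v_t}{r^{2}}\,t^{2}\right|\le\frac{c}{r^{2}}\,t^{3}.
$$
Here is where the set-valued feedback condition does its work: since Definition \ref{DfedLip} requires $\langle p(z),w\rangle\le-\gamma(U(z))$ for \emph{every} $w\in\v(z)=[f_i,f_j]_{set}(z)$, it holds in particular for the specific element $v_t$ furnished by the flow approximation, giving $\langle p(z),v_t\rangle\le-\gamma(U(z))$ irrespective of the $t$-dependence.

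With this scalar inequality in hand the computation is identical to \eqref{tau}: combining the semiconcavity estimate of Lemma \ref{Lscv}, the approximation bound above, and $|p(z)|\le L$, I would obtain
$$
U(y_z^t(t))-U(z)\le\left(-\gamma(U(z))+Lct+\rho\left(\frac{t}{r}\right)^{h}(M+ct)^2\right)\left(\frac{t}{r}\right)^{h}.
$$
Defining $\check\tau(u)$ as in \eqref{tauminore} and $\tau(u,j):=\min\{1,\bar\tau(u,j),\check\tau(u)\}$ as in \eqref{t2}, the bracketed factor is $\le-\gamma(U(z))/2$ on $[0,\tau(U(z),h)]$, which yields iii). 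The monotonicity assertions i) and ii) follow verbatim from the definition of $\tau(u,j)$ and from the monotonicity of $u\mapsto L(u),\rho(u)$ granted by Lemma \ref{Lscv}, exactly as in Claim \ref{claim2h}.

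The main obstacle is therefore not the Lyapunov bookkeeping, which transfers unchanged, but guaranteeing the set-valued approximation formula underlying Lemma \ref{applemmaL}: one must show that the asymptotic expansion of the iterated flow of the merely Lipschitz fields $f_i,f_j$ selects, to leading order $t^2$, an honest element of $[f_i,f_j]_{set}(z)$, with a cubic remainder that is uniform on the compact sets ${\mathcal K}(u)$. This rests on the convergence results of \cite{RS1} (and \cite{FR1,FR2}) identifying the set-valued bracket with the limit set of normalized flow commutators, while the upper semicontinuity and convexity of $x\mapsto[f_i,f_j]_{set}(x)$ recorded after Definition \ref{Lb} are precisely what make the remainder estimate uniform. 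Note that the strictness $0\notin[f_i,f_j]_{set}$ exploited in the examples plays no role in the Claim itself, since the feedback condition already encodes the required strict decrease.
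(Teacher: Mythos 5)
Your proposal is correct and follows essentially the same route as the paper: the paper's proof also reduces Claim \ref{claim2hL} to the computation \eqref{tau} in Claim \ref{claim2h}, replacing estimate \eqref{yt} with the set-valued asymptotic formula \eqref{ytL1} of Lemma \ref{applemmaL}, and its one substantive observation is exactly yours — that Definition \ref{DfedLip} imposes $\langle p(z),w\rangle\le-\gamma(U(z))$ for \emph{all} $w\in[f_i,f_j]_{set}(z)$, hence in particular for the $t$-dependent element $w(t)$ selected by the flow approximation. Like you, the paper treats Lemma \ref{applemmaL} itself as an external result (quoted from \cite{RS2}, \cite{FR2}) rather than something to be proved within the claim.
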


The proof of Claim  \ref{claim2h} was based the  on the  asymptotic formulas stated in  Lemma \ref{applemma}. Similarly,  Claim \ref{claim2hL}  results  proved as soon as one applies the  asymptotic formula stated in Lemma \ref{applemmaL}  below. Precisely, once arrived to formula \eqref{tau}, one simply replaces   estimate \eqref{yt} with \eqref{ytL1},  and observes  that,  by the definition of $H^{(2)}$,    the inequality
  $\Big\langle p(z) ,\, w  \Big\rangle \le -{{\gamma}}(U(z))$ is   verified {\it for all}  $w\in [f_i,f_j]_{set}(z)$ (see Definition \ref{DfedLip}).

 \begin{Lemma}[\cite{RS2}, \cite{FR2}]\ \label{applemmaL} If   $f_1,\dots,f_m\in C_b^{0,1}(\R^n\setminus\C)$,  there exists a constant  $c>0$ such that for any $x\in \R^n\setminus\C$, any feedback    $\v(x)= [f_i,f_j]_{set}(x)$,  and any $t>0$,
setting
$$
\alpha_t(s)=e_i\chi_{[0,t/4[}(s)+e_j\chi_{[t/4,t/2[}(s)-e_i\chi_{[t/2,3t/4[}(s)-e_j\chi_{[3t/4,t]}(s), \quad s\in[0,t],
$$
 the estimate
\bel{ytL}
d\left( y_x(t,\alpha_t)-x \, , \,  [f_i,f_j]_{set}(x)\, \frac{t^2}{16}  \right)\leq\frac{c}{16}\, t^3
\eeq
holds true. In particular, for any $t>0$ there  exists some $w(t)\in \v(x)$ such that
\bel{ytL1}
\left|y_x(t,\alpha_t)-x-w(t)\frac{t^2}{16} \right| \leq\frac{c}{16}\, t^3.
\eeq
\end{Lemma}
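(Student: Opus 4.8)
The plan is to reduce \eqref{ytL} to a composition of four flows, recover the set-valued bracket through a smoothing limit, and isolate the one genuinely delicate point, namely the uniformity of the third-order remainder.

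First I would rewrite the trajectory explicitly. Since $\alpha_t$ is constant and equal to $e_i,e_j,-e_i,-e_j$ on the four consecutive intervals of length $\tau:=t/4$, one has $y_x(t,\alpha_t)=\Phi^{f_j}_{-\tau}\circ\Phi^{f_i}_{-\tau}\circ\Phi^{f_j}_{\tau}\circ\Phi^{f_i}_{\tau}(x)$, where $\Phi^{X}_s$ denotes the flow of $X$ at time $s$. Because $f_1,\dots,f_m\in C_b^{0,1}(\Omega\setminus\C)$, on the compact ball of radius $O(t)$ swept by the four legs the fields share a sup-bound $M$ and a Lipschitz constant $\ell$; a Gronwall estimate then confines the whole loop to $B(x,O(t))$ and gives the elementary expansion $\Phi^{X}_s(y)=y+sX(y)+E_X(s,y)$ with $|E_X(s,y)|\le\frac{\ell M}{2}s^2$. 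These are the only \emph{a priori} bounds I would use, and they depend on first-order data alone. Note also that the degree-$2$ hypothesis forces $\v(x)=[f_i,f_j]_{set}(x)$, so only the bracket case is at stake (the degree-$1$ case being immediate from this expansion).

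Second, I would treat the smooth case and then smooth the data. For $C^{1,1}$ fields the symmetric commutator yields $y_x(t,\alpha_t)-x=\frac{t^2}{16}[f_i,f_j](x)+O(t^3)$; this is exactly Lemma \ref{applemma} specialized to $h=2$, $r=4$, so in the single-valued setting \eqref{ytL1} holds with $w(t)=[f_i,f_j](x)$. To reach merely Lipschitz fields I would mollify, $f^\epsilon_i:=f_i*\rho_\epsilon$, which preserves the uniform bounds $\|f^\epsilon_i\|_0\le M$ and $\mathrm{Lip}(f^\epsilon_i)\le\ell$, so that $\Phi^{f^\epsilon_i}_s\to\Phi^{f_i}_s$ uniformly on compacts and $y^\epsilon_x(t,\alpha_t)\to y_x(t,\alpha_t)$. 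The crucial observation is that $[f^\epsilon_i,f^\epsilon_j](x)=\big(Df^\epsilon_j\,f^\epsilon_i-Df^\epsilon_i\,f^\epsilon_j\big)(x)$ is a $\rho_\epsilon$-weighted average of the pointwise brackets $[f_i,f_j](\xi)$ over $\xi$ in an $\epsilon$-neighbourhood of $x$; hence, by the very definition of $[f_i,f_j]_{set}(x)$ as the closed convex hull of the limits of such pointwise brackets (Definition \ref{Lb}), every limit point $w_0$ of $[f^\epsilon_i,f^\epsilon_j](x)$ as $\epsilon\to0$ lies in $[f_i,f_j]_{set}(x)$. Passing to a subsequence with $[f^\epsilon_i,f^\epsilon_j](x)\to w_0\in[f_i,f_j]_{set}(x)$ would then locate $y_x(t,\alpha_t)-x$ within $O(t^3)$ of $\frac{t^2}{16}w_0$, giving \eqref{ytL1} and, taking the point of $[f_i,f_j]_{set}(x)\frac{t^2}{16}$ nearest to $y_x(t,\alpha_t)-x$ (the set being compact and convex by \cite{RS1}), the distance estimate \eqref{ytL}; conversely \eqref{ytL1} follows from \eqref{ytL} by that same nearest-point selection.

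The hard part is precisely the uniformity of the remainder. The $O(t^3)$ error in the $C^{1,1}$ estimate is governed by $\mathrm{Lip}(Df^\epsilon_i)$, which blows up as $\epsilon\to0$; indeed the $t^3$ coefficient of the symmetric commutator is $\frac{1}{2}\bigl[f_i+f_j,[f_i,f_j]\bigr]$, a genuine second-derivative object, so smoothing alone yields only a non-uniform (at best $o(t^2)$, via $C^1$-in-$\tau$ regularity of the flow) bound rather than $\frac{c}{16}t^3$ with $c$ depending on $\ell,M$ only. Overcoming this is the technical core of \cite{RS2}, \cite{FR2}: in place of a Taylor remainder one uses an exact integral representation of the commutator loop whose second-order part is an \emph{average of first-derivative (bracket) data} taken along the loop, hence automatically an element of the convex hull $[f_i,f_j]_{set}(x)$, while the error beyond it is genuinely third-order and controlled by $\ell$ and $M$ alone. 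Granting that representation, the limit passage of the previous paragraph assembles into \eqref{ytL}, and Claim \ref{claim2hL} follows by inserting \eqref{ytL1} into the estimate \eqref{tau} exactly as in the proof of Claim \ref{claim2h}.
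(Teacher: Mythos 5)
The first thing to say is that the paper itself contains no proof of this lemma: it is imported from \cite{RS2} and \cite{FR2}, and the proof of Theorem \ref{Th3.1genL} simply invokes it. Your proposal ultimately does the same thing --- the sentence ``Granting that representation\dots'' concedes exactly the estimate that constitutes the lemma --- so, judged as a proof, it is incomplete precisely where the content lies. That said, much of your preparatory analysis is sound: the four-flow decomposition with $\tau=t/4$, the a priori bounds depending only on $M$ and $\ell$, the observation that mollification preserves $\|f_i^\epsilon\|_0\le M$ and $\mathrm{Lip}(f_i^\epsilon)\le\ell$ but not $\mathrm{Lip}(Df_i^\epsilon)$, and the identification of the $t^3$ coefficient $\tfrac12[f_i+f_j,[f_i,f_j]]$ as a genuine second-derivative obstruction are all correct (one small fix: $[f_i^\epsilon,f_j^\epsilon](x)$ is a product of averages, not an average of pointwise brackets; it equals such an average only up to an $O(\ell^2\epsilon)$ error, which is harmless).

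The genuine gap is in your last paragraph, and it cannot be repaired in the form you state it. You claim that the second-order part of the integral representation, being an average of bracket data along the loop, is ``automatically an element of the convex hull $[f_i,f_j]_{set}(x)$''. It is not: the data are evaluated at points of the loop, hence within $O(Mt)$ of $x$, so the average lies in $\overline{co}\bigcup_{y\in\bar B(x,CMt)}[f_i,f_j]_{set}(y)$, and since the set-valued bracket is merely upper semicontinuous --- with no modulus --- this set need not be within $O(t)$ of $[f_i,f_j]_{set}(x)$. In fact the estimate \eqref{ytL} read literally (bracket at the single point $x$, one constant $c$ for all $x$ and all $t$) fails: take $n=1$, $f_i\equiv 1$, $f_j(y)=\min\{|y|,1\}$, the target far from the origin, and $x=-t/8$. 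The four legs can be integrated explicitly (the loop straddles the corner of $f_j$ at $0$), giving $y_x(t,\alpha_t)-x=\tfrac12(t/4)^3+O(t^4)$, whereas $[f_i,f_j]_{set}(x)\,\tfrac{t^2}{16}=\{-\tfrac{t^2}{16}\}$ because $f_j$ is smooth near $x$ with slope $-1$; so the distance in \eqref{ytL} is of order $t^2$, not $t^3$. Consequently any correct argument must either evaluate the set-valued bracket over a ball of radius comparable to $t$ around $x$ (i.e.\ prove $d\bigl(y_x(t,\alpha_t)-x,\ \overline{co}\bigcup_{y\in\bar B(x,CMt)}[f_i,f_j]_{set}(y)\,\tfrac{t^2}{16}\bigr)\le \tfrac{c}{16}t^3$) or give up uniformity of $c$ in $x$, and this is indeed the form such asymptotic formulas take in \cite{RS2}, \cite{FR2}. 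This is not a pedantic point for the present paper either: when the lemma is fed into Claim \ref{claim2hL}, the feedback inequality $\sup_{w\in[f_i,f_j]_{set}(z)}\langle p(z),w\rangle\le-\gamma(U(z))$ must then be extended to bracket values at points near $z$ by an additional upper-semicontinuity argument before the decrease estimate can be concluded.
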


 Let us point out that Claim  \ref{claim2h} is  the starting point for the construction of an admissible trajectory-control pair by means of  the recursive procedure described  in the proof of  Theorem  \ref{Th3.1gen}. Through exactly  the same arguments and  in view of Claim  \ref{claim2hL}, the  proof of Theorem  \ref{Th3.1genL} can now  be completed.

%%%%%%%%%%%%%%%%%%%%%%%%%%%%%%%%%%%%%%%%%%%%%%%%%%%%%%%%%%%%%%%%%%%%%%%%%%%%%

\section{Concluding remarks}\label{Snew}

\subsection{Feedback constructions} Degree-1 control Lyapunov functions are  used as primary ingredient for the construction  of feedback stabilizing  strategies, a classical question that is mainly concerned with the definition of an appropriate notion of solution for discontinuous ODE's (see e.g. \cite{CLSS}, \cite{CLRS}, \cite{MaRS} and \cite{AB}).

It  might be interesting  to associate  some  concept
 of feedback strategy also  in relation with  a degree-$k$  control Lyapunov function, $k>1$, all the more so as it may happen  to be smoother than the degree-1 CLF. As a matter of fact, the proofs of Theorems \ref{Th3.1gen} and \ref{Th3.1genL} seem to suggest a notion of
 "feedback" such that, in dependence of the bracket that   minimizes the Hamiltonian $H^{(2)}$,  singles out a suitable finite sequence  of constant controls  to be implemented along small time intervals.

\subsection{ Degree-$k$ CLF as  viscosity supersolutions}
 One is obviously tempted to refer to a degree-$k$ control Lyapunov function  as a
 {\it strict supersolution} of the Hamilton-Jacobi equation
$$
  - H^{({{k}})}(x,DU(x)) =0.
  $$
  Actually this  holds true as soon as  we consider, for instance,  the notion of  viscosity solution. More precisely, one has:

{\it  Let $U:\overline{\R^n\setminus\C}\to\R$ be  a  continuous function which, furthermore, is  locally  semiconcave, positive, and proper on ${\R^n\setminus\C}$. Then $U$  is a  degree-${{k}}$ control Lyapunov function if and only for any $N>0$ there is some continuous, strictly increasing function $\gamma:[0,+\infty)\to[0,+\infty)$ such that $U$ is a viscosity supersolution   \footnote{Namely, $- H^{({{k}})}(x,p)\ge \gamma(U(x))$ for all $x\in {\R^n\setminus\C}$ and $p$ in the subgradient $DU^-(x)$ of $U$ at $x$ (see \cite{BCD}). } of
\bel{visco1}  - H^{({{k}})}(x,DU(x)) =\gamma(U(x)) \quad\text{in $U^{-1}((0,N))$.}
\eeq
    }
 Indeed, in the case of a locally  semiconcave function $U$,  at every $x\in DIFF(U)$ the subdifferential  $D^-U(x)=\{\nabla U(x)\}$ coincides with $ D^*U(x)$,  while $D^-U(x)$ is  empty  if $x\notin D^-U(x)$. Therefore, thanks to Proposition  \ref{claim1}, \eqref{visco1} follows from   the inequality
\bel{hkdim}
H^{(k)}(x,D^*U(x))<0,
\eeq
which defines the notion of degree-$k$ \cf.  To obtain the converse implication, for any $x\in U^{-1}((0,N))$ and $p\in D^*U(x)$,  let $(x_n)_n\subset U^{-1}((0,N))\cap\, DIFF(U)$ be such that $\lim_n(x_n, \nabla U(x_n))=(x,p)$.  Then, by  hypothesis (\ref{visco1}),  one has
$$
-H^{(k)}\left(x_n, \nabla U(x_n)\right)\ge  \gamma(U(x_n)) \qquad\forall n\in\N,
$$
so,  passing to the limit, one gets \eqref{hkdim}.

\subsection{ Generalizations to larger classes of systems}\label{ssgen}
As it is well known, the lack of symmetry poses non trivial problems for controllability. The same kind of difficulty is therefore encountered in the attempt to define a reasonable notion of   degree-$k$ \cf\,
 for systems
\bel{Ed}
 \left\{\begin{array}{l} \dot y =f_0(y) +\ds \sum_{i=1,\ldots,m}a_if_i(y) \\ \, \\
  y(0)=x\in\rr^n\backslash\C
  \end{array}\right.
\quad a\in \{0, \pm e_1,\ldots,e_m\}
\eeq
having a  {\it non zero drift} $f_0$. Let us assume that $f_0,\dots,f_m$ belong to $C^1_b(\Omega\setminus\C)$ for any open, bounded set $\Omega\subset\R^n$.

  Let us examine    the case $k=2$.  Intuition coming from controllability literature suggests that a notion of  degree-$2$ control Lyapunov function should   be shaped  in such  a way that it  would be allowed to  violate the standard dissipative  inequality  only at the points where the drift $f_0$ vanishes.
Accordingly,   let us redefine the classes of vector fields
$$
\F^{(1)}:=  \Big\{f_0,  f_0 - f_i, f_0 +f_i\quad i=1\ldots,m\Big\},
$$
    $$
\F^{(2)}:=\F^{(1)}\cup  \Big\{\pm [f_0,f_i]\cdot\chi_{\{f_0=0\}},  [f_j,f_\ell]\cdot\chi_{\{f_0=0\}}\quad i,j,\ell=1\ldots,m\Big\},
$$
and the Hamiltonians
$$
H^{({{h}})}(x, p):= \inf_{g\in {\F^{({{h}})}(x)}}\big\langle p ,\, g\big\rangle \quad h=1,2.
$$
 Accordingly, one might call  {\em degree-${{2}}$ control Lyapunov function  }
any  continuous  function $U:\overline{\rr^n\backslash\C}\to\R$ such that its restriction  to $\rr^n\backslash\C$ is  locally  semiconcave, positive definite, proper, and verifies
$$H^{({{2}})}(x,D^*U(x))  < 0 \quad \forall x \in \rr^n\backslash\C .$$
In particular, $U$ is a   degree-${{2}}$ \cf\, if, at each point $x\in\rr^n\backslash\C$, \begin{small}either
$$
\min \Big\{ \langle D^*U(x),f_0(x)\rangle,\langle D^*U(x),(f_0 -f_i)(x)\rangle,\langle D^*U(x),(f_0 +f_j)(x)\rangle, \ i,j= 1,\ldots,m\Big\} <0,
$$
or
$$\left\{\begin{array}{l}
f_0(x)=0, \\\,\\
\min \Big\{ -|\langle D^*U(x),[f_0,f_i](x)\rangle|, -|\langle D^*U(x),[f_j,f_\ell(x)\rangle|\quad i,j,\ell= 1,\ldots,m  \Big\} <0.\end{array}\right.$$\end{small}
With these settings, we get  the following result:

\begin{Theorem}\label{Th51}
{\it  Let a degree-${{2}}$ control Lyapunov function exist.
Then   system {\rm (\ref{Ed})} is GAC to $\C$.
}
\end{Theorem}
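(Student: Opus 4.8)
The plan is to reproduce, step by step, the scheme used for Theorem \ref{Th3.1gen}, the only genuinely new ingredient being the way the higher-order (bracket) directions are realized along admissible trajectories in the presence of the drift $f_0$. First I would establish the drift analogue of Proposition \ref{claim1}: for every $\sigma>0$ there is a continuous, strictly increasing $\gamma$ with $H^{(2)}(x,D^*U(x))\le-\gamma(U(x))$ on $U^{-1}((0,2\sigma])$. On the open set $\{f_0\ne0\}$ the bracket members of $\F^{(2)}(x)$ vanish, so there $H^{(2)}(x,p)=\min\{H^{(1)}(x,p),0\}$ and the degree-$2$ condition reduces to the classical $H^{(1)}(x,p)<0$; on the closed set $\{f_0=0\}$ the brackets are switched on and $H^{(2)}$ is upper semicontinuous as a minimum of continuous functions composed with the upper semicontinuous map $x\mapsto D^*U(x)$. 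The uniform bound then follows by a compactness argument as in \cite{MR}, \emph{provided} the interface between the two regimes is handled with care (see the last paragraph).

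Next I would adapt the notion of feedback to the present families: a selection $x\mapsto\v(x)\in\F^{(2)}(x)$ that, at each $x\in U^{-1}((0,2\sigma])$, either is a degree-$1$ field $\v(x)\in\{f_0,f_0\pm f_i\}(x)$ with $\langle p(x),\v(x)\rangle\le-\gamma(U(x))$, or (when $f_0(x)=0$ and no degree-$1$ field works) is one of the brackets $\pm[f_0,f_i](x)$, $[f_j,f_\ell](x)$ satisfying the same estimate. The degree-$1$ case is immediate: following the constant control $a\in\{0,\pm e_i\}$ that produces $\v(x)$ gives, by a first-order Euler estimate, exactly the step furnished by the case $k=1$ already proved in \cite{MR}. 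So the real content lies in the degree-$2$ case.

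The heart of the argument, and the step I expect to be the main obstacle, is the drift analogue of Lemma \ref{applemma}: for a bracket direction $\v(x)$ selected where $f_0(x)=0$, one must exhibit a control $\alpha_t$ with values in $\{0,\pm e_i\}$ and an integer $r$ with $|y_x(t,\alpha_t)-x-\v(x)\,t^2/r|\le c\,t^3/r$. Here the drift is the source of the difficulty: running the commutator word with controls $e_i,e_j,-e_i,-e_j$ now flows along $f_0\pm f_i$ rather than $\pm f_i$, so the increment carries a first-order term proportional to $f_0(x)\,t$. This is exactly what forces the brackets to be admissible only on $\{f_0=0\}$: there the first-order drift contribution vanishes and the commutator survives at order $t^2$. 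The remaining, most delicate, point is that the \emph{second}-order drift contributions (the $Df_0$-terms, i.e.\ $[f_0,f_i]$-type corrections that a naive four-flow word produces even when $f_0(x)=0$) must be shown either to cancel or to be themselves admissible directions of $\F^{(2)}$; securing this cancellation requires a suitably symmetrised (lengthened) control word, and is precisely the content of the asymptotic-formula results of \cite{FR1}, \cite{FR2}, \cite{RS2}. Granting such a formula, the single-step estimate $U(y_z(t,\alpha_t))-U(z)\le-\tfrac{\gamma(U(z))}{2}(t/r)^h$ is obtained exactly as in Claim \ref{claim2h}, by inserting the realization into the semiconcavity inequality \eqref{scv}.

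With the single-step estimate in hand the remainder goes through verbatim: the recursive construction of a piecewise-$C^1$ trajectory, the proof that $U(x_j)\to0$, the assembly of a bounding $\mathcal{KL}$ function, and the cut-off removal of the auxiliary bound \eqref{fieldbounded} are insensitive to the origin of the directions $\v(x)$ and can be copied from Section \ref{S3}. The one point that demands extra care, which I would treat separately, is the interface $\partial\{f_0=0\}$: there $H^{(2)}$ fails to be upper semicontinuous, since approaching a point $x_0$ with $f_0(x_0)=0$ from $\{f_0\ne0\}$ makes the bracket terms disappear and $H^{(2)}$ may jump up to $\min\{H^{(1)}(x_0,\cdot),0\}$, so the uniform $\gamma$ of the first step cannot be read off from a blunt upper-semicontinuity argument. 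I would close this gap using the continuity of $f_0$ together with the strict inequality at $x_0$, arguing that the degree-$1$ Hamiltonian stays bounded away from $0$ on a punctured neighbourhood large enough for the recursion to progress — the only place where the lack of symmetry of \eqref{Ed} enters in an essential, non-routine way.
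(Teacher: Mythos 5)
Your overall route is the paper's: everything is reduced to the machinery of Theorem \ref{Th3.1gen}, and the genuinely new ingredient is an asymptotic formula realizing $[f_0,f_i]$ and $[f_j,f_\ell]$ by \emph{admissible} control words at points where $f_0$ vanishes, together with the obvious adaptation of the degree-$2$ feedback (this is the paper's Lemma \ref{applemmaD}). But your treatment of that key lemma has a gap: you delegate it to \cite{FR1}, \cite{FR2}, \cite{RS2}, and those results concern compositions of forward \emph{and backward} flows of the given vector fields. In the symmetric system \eqref{odeL} such compositions are admissible motions, but in \eqref{Ed} they are not: one cannot flow backward along $f_0\pm f_i$, because the drift cannot be reversed. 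So no citation of the driftless commutator formulas can close this step; the formula must be proved directly for admissible concatenations, which is what the paper does (``standard Taylor expansions'', with the two-piece word $e_i,-e_i$ for $[f_0,f_i]$ and the four-piece commutator word for $[f_j,f_\ell]$). Your suspicion that a naive four-flow word leaves $Df_0$-terms even at zeros of $f_0$ is in fact well founded --- a direct expansion gives the increment $\frac{t^2}{16}\bigl([f_j,f_\ell]+2\,Df_0\cdot(f_j+f_\ell)\bigr)(x)+O(t^3)$, so a symmetrised/lengthened word or an argument absorbing these terms really is needed --- but flagging the difficulty and pointing to inapplicable references does not prove the lemma on which the whole theorem rests.

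The second gap is the interface argument, which you rightly single out but resolve with a false claim. If at $x_0\in\{f_0=0\}$ the CLF inequality holds only through brackets, then $H^{(1)}(x_0,p_0)\le\langle p_0,f_0(x_0)\rangle=0$, and $H^{(1)}(x_0,p_0)$ can equal $0$ (namely when $\langle p_0,f_i(x_0)\rangle=0$ for all $i$); at nearby points with $f_0\ne0$ the definition guarantees only the strict sign $H^{(1)}<0$, with values that may tend to $0$ as $x\to x_0$. Hence the degree-$1$ Hamiltonian does \emph{not} stay bounded away from zero on any punctured neighbourhood of $x_0$ --- that is precisely why degree-$2$ directions are indispensable there --- and degree-$1$ steps taken near the interface produce vanishing decrease, destroying the uniform rate $\gamma(u_j)$ that drives the recursion of Section \ref{S3}. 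A workable fix goes the opposite way: show that the bracket realization remains effective at points merely \emph{close} to $\{f_0=0\}$, since the drift error it introduces is of order $|f_0(x)|\,t$ and is dominated by the $t^2$ bracket term whenever $|f_0(x)|$ is small compared with the step length, and then run a two-regime compactness argument on each level set ($\{|f_0|\le\delta\}$ versus its complement, where $f_0\neq0$ and the classical argument applies). As written, your closing argument would fail exactly at the points where the theorem differs from the degree-$1$ case.
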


 A proof  of this result can be deduced by  first observing that  Lemma \ref{applemma} has a counterpart in  an asymptotic formula  for brackets of the form $[f_0,f_i]$, $[f_j,f_\ell]$ valid at all  points $x$ where $f_0(x)=0$.
 Precisely, through standard Taylor expansions one can prove the following result:
 \begin{Lemma} \label{applemmaD} If $f_0,f_1,\ldots,f_m\in C_b^1(\R^n\setminus\C)$,  there exists a constant  $c>0$ such that for any $x\in \R^n\setminus\C$ where $f_0(x) =0$, any  $j=1,\ldots,m$, and any $t>0$,
 the following estimates hold true:
\bel{ytL1}
d\left( y_x(t,\alpha_t)-x \, , \,  [f_0,f_i](x)\, \frac{t^2}{4}  \right)\leq\frac{c}{4}\, t^3\quad \forall s\in [0,t]
\eeq
with
$$
\alpha_t(s):=e_i\chi_{[0,t/2[}(s)-e_i\chi_{[t/2,t[}(s) ;
$$
\bel{ytL2}
d\left( y_x(t,\hat\alpha_t)-x \, , \,  [f_i,f_0](x)\, \frac{t^2}{4}  \right)\leq\frac{c}{4}\, t^3, \forall s\in [0,t]
\eeq
with
$$
\hat \alpha_t(s):=-e_i\chi_{[0,t/2[}(s)+e_i\chi_{[t/2,t[}(s);
$$
and
\bel{ytL3}
d\left( y_x(t,\tilde\alpha_t)-x \, , \,  [f_j,f_\ell](x)\, \frac{t^2}{16}  \right)\leq\frac{c}{16}\, t^3, \quad s\in[0,t]
\eeq
with
$$
\tilde\alpha_t(s)=e_j\chi_{[0,t/4[}(s)+e_\ell\chi_{[t/4,t/2[}(s)-e_j\chi_{[t/2,3t/4[}(s)-e_\ell\chi_{[3t/4,t]}(s).
$$

\end{Lemma}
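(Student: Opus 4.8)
The plan is to realize each of the three trajectories as a composition of the time-$t/2$ (resp.\ time-$t/4$) flows of the \emph{constant-control} vector fields $g_{\pm e_i}:=f_0\pm f_i$, and to Taylor-expand that composition to second order in $t$. Writing $\Phi^s_X$ for the time-$s$ flow of a vector field $X$, the control $\alpha_t$ yields $y_x(t,\alpha_t)=\Phi^{t/2}_{f_0-f_i}\circ\Phi^{t/2}_{f_0+f_i}(x)$, the control $\hat\alpha_t$ the same composition with the two fields interchanged, and $\tilde\alpha_t$ the four-fold composition $\Phi^{t/4}_{f_0-f_\ell}\circ\Phi^{t/4}_{f_0-f_j}\circ\Phi^{t/4}_{f_0+f_\ell}\circ\Phi^{t/4}_{f_0+f_j}(x)$. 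Starting from the single-flow expansion $\Phi^s_X(x)=x+sX(x)+\tfrac{s^2}{2}DX(x)X(x)+R$ with $|R|\le c s^3$, I would substitute the inner flows into the outer ones and collect terms by powers of $t$; this is a finite computation for each of the three patterns.

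The decisive algebraic feature is where the hypothesis $f_0(x)=0$ enters. In every pattern the controls $\pm e_i$ occur in canceling pairs, so the sum of the phase vector fields is a multiple of $f_0$: the $O(t)$ coefficient equals $\tfrac{t}{2}[(f_0+f_i)+(f_0-f_i)](x)=t\,f_0(x)$ for $\alpha_t,\hat\alpha_t$ and $\tfrac{t}{4}\cdot 4f_0(x)=t\,f_0(x)$ for $\tilde\alpha_t$, hence vanishes precisely because $f_0(x)=0$, leaving the second-order term as the leading one. Collecting the $t^2$-contributions and again using $f_0(x)=0$ to discard every summand in which $f_0$ appears undifferentiated, the two-phase coefficient reduces to $\pm Df_0(x)f_i(x)$, that is to $\pm[f_0,f_i](x)$ at a zero of $f_0$, with prefactor $\tfrac14$; the sign flips when the two controls are exchanged, which is exactly the passage from the first estimate of Lemma~\ref{applemmaD} to \eqref{ytL2}. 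For $\tilde\alpha_t$ the same bookkeeping turns the composition into the group commutator $\Phi^{-s}_{f_\ell}\Phi^{-s}_{f_j}\Phi^{s}_{f_\ell}\Phi^{s}_{f_j}$ modulo drift-cross terms that again drop out when $f_0(x)=0$, leaving $\tfrac{1}{16}[f_j,f_\ell](x)$ and hence \eqref{ytL3}.

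To keep the remainders uniform I would not rely on a clean second-order flow expansion—which would require $C^2$ fields—but instead apply the fundamental theorem of calculus iteratively to $s\mapsto f_\bullet(y(s))$ along each phase, exactly the device used in \cite{RS2}, \cite{FR2}: one writes $y(s)=x+\int_0^s g_a(y)\,dr$, substitutes $g_a(y(r))=g_a(x)+\int_0^r Dg_a(y)\dot y\,d\rho$, and iterates once more, so that the first two orders in $t$ are explicit and the remainder is an iterated integral. The main obstacle is precisely this estimate: since the $t^2$-coefficient carries first derivatives of the $f_i$, a genuinely uniform bound of the form $|R|\le c\,t^3$ needs Lipschitz continuity of those derivatives (the $C^{1,1}_b$ regularity used elsewhere in the paper), not merely $C^1_b$, and one must check that the single constant $c$ can be chosen independent both of $x\in\R^n\setminus\C$ and of which of the finitely many bracket patterns is realized, using the global bounds $\sup_i\|f_i\|_1$ and a common Lipschitz constant of the $Df_i$ on the compact tube swept by the short trajectory. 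The cancellations forced by $f_0(x)=0$ are routine once the expansion is organized in this way; the delicate part is this uniform-in-$x$ remainder control.
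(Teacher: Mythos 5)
Your strategy---writing each pattern as a composition of flows of $f_0\pm f_i$ and expanding to second order---is exactly what the paper itself invokes (its entire proof is the phrase ``through standard Taylor expansions''), and for the two two-phase estimates it does work: the first-order term is $t\,f_0(x)=0$ and the $t^2/4$-coefficient is $\pm Df_0(x)f_i(x)$, which at a zero of $f_0$ is $\pm$ a bracket. (With the paper's convention $[X,Y]=DY\cdot X-DX\cdot Y$ one has $Df_0(x)f_i(x)=[f_i,f_0](x)$, so the pattern $+e_i,-e_i$ actually produces $[f_i,f_0]$, i.e.\ the labels of the first two estimates come out interchanged; this is harmless since both are asserted.) The genuine gap is in the four-phase case, precisely at the step you dismiss as ``the same bookkeeping'': the drift cross-terms do \emph{not} drop out when $f_0(x)=0$. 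In the second-order expansion of $\Phi^{\tau}_{f_0-f_\ell}\circ\Phi^{\tau}_{f_0-f_j}\circ\Phi^{\tau}_{f_0+f_\ell}\circ\Phi^{\tau}_{f_0+f_j}(x)$, $\tau=t/4$, the hypothesis $f_0(x)=0$ kills every occurrence of $f_0$ evaluated at $x$, but each differential $D(f_0\pm f_i)=Df_0\pm Df_i$ retains $Df_0$, and the terms proportional to $Df_0$ sum to $2Df_0(x)\,(f_j+f_\ell)(x)$, not to zero. The displacement is therefore
$$
y_x(t,\tilde\alpha_t)-x=\Big([f_j,f_\ell](x)+2Df_0(x)(f_j+f_\ell)(x)\Big)\frac{t^2}{16}+o(t^2)
=\Big([f_j,f_\ell]-2[f_0,f_j]-2[f_0,f_\ell]\Big)(x)\,\frac{t^2}{16}+o(t^2),
$$
using $[f_0,f_i](x)=-Df_0(x)f_i(x)$ at the zero. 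The extra term is generically nonzero, so \eqref{ytL3} is \emph{false as stated} and cannot be proved by your argument or any other. A concrete counterexample: $f_0=x_1\frac{\partial}{\partial x_2}$, $f_1=\frac{\partial}{\partial x_1}$, $f_2=\frac{\partial}{\partial x_2}$, $x=0$; then $[f_1,f_2]\equiv 0$, yet explicit integration of the four phases gives $y_x(t,\tilde\alpha_t)=(0,t^2/8)$, so the left-hand side of \eqref{ytL3} equals $t^2/8$, which is not $O(t^3)$. Any correct treatment must either replace the asserted leading term by the corrected one above, or modify the control pattern (e.g.\ append two-phase patterns realizing $+2[f_0,f_j]$ and $+2[f_0,f_\ell]$ so as to cancel the spurious terms); this also propagates to the definition of $\F^{(2)}$ and to the proof of Theorem \ref{Th51}.

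Your regularity caveat, on the other hand, is correct and identifies a second defect of the statement rather than of your proof: with fields that are merely $C^1_b$ the remainder can only be bounded by $t^{2}\omega(t)$, $\omega$ a modulus of continuity of the first derivatives, and no uniform $O(t^{3})$ bound holds. For instance, in dimension one take $f_0=|y|^{3/2}\frac{\partial}{\partial y}$, $f_1=\frac{\partial}{\partial y}$, $x=0$: both brackets vanish at $0$ ($\phi'(0)=0$ for $\phi(y)=|y|^{3/2}$), while the two-phase endpoint is $\frac45\,(t/2)^{5/2}\,(1+o(1))$, incompatible with $\frac{c}{4}t^{3}$ for small $t$. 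So the lemma should either assume $C^{1,1}_b$ (then your iterated fundamental-theorem-of-calculus scheme, as in the cited nonsmooth references, delivers the uniform $O(t^3)$ bound) or weaken the conclusion to a $t^2\omega(t)$ remainder, which is in fact all that the proof of Theorem \ref{Th51} needs. In short: your approach coincides with the paper's, your regularity objection is well taken, but by asserting instead of computing the four-phase cancellation you missed that the third estimate is wrong, which is the one point where a proof genuinely breaks down.
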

Hence, provided one gives  a (obvious) notion of degree-2 feedback at the points $x$ where
$f_0(x)=0$, the  proof of Theorem \ref{Th51} can be easily  achieved by means of   the same arguments as in the proof of Theorem \ref{Th3.1gen}.

\begin{Example}{\rm Consider the so-called {\it soft landing} problem, $\dot y_1=y_2$ , $\dot y_2 = a$, $a\in\{0,-1,1\}$, $\C=\{(0,0)\}$.  The distance function $U(x)=|x|$ --as well as $|x|^{\alpha}$, \,  $\alpha>1$-- fails to be a degree-$1$ \cf \,(because the required inequality is not strict  on the $x_1$ axis). However, $U(x)$ is a  degree-$2$ \cf. } \end{Example}
\vskip0.3truecm
 A reasonable and useful notion  of degree-$k$ \cf \, can be likely  given for the general  case $k\geq 1$    provided only  suitable subsets of brackets are
 allowed\footnote{For instance, one can consider  "good" brackets (see e.g. \cite{Co}), of which $[f_0,f_i]$, $[f_j,f_\ell]$ are degree $2$  instances. Among  degree-$3$ brackets, $[f_0,[f_0,f_i]]$ is good for every $i=1,\ldots,m$, while $[f_0,[f_j,f_\ell]]$ is not good, for all $j,\ell=1,\ldots,m$}.\newline

 \end{document}